\let\oldmarginpar\marginpar
\renewcommand\marginpar[1]{\-\oldmarginpar[\raggedleft\footnotesize #1]%
{\raggedright\footnotesize #1}}
\begin{document}

\newtheorem{theorem}{Theorem}[section]
\newtheorem{corollary}[theorem]{Corollary}
\newtheorem{lemma}[theorem]{Lemma}
\newtheorem{proposition}[theorem]{Proposition}
\theoremstyle{definition}
\newtheorem{definition}[theorem]{Definition}
\theoremstyle{remark}
\newtheorem{remark}[theorem]{Remark}
\theoremstyle{definition}
\newtheorem{example}[theorem]{Example}

\def\rank{{\text{rank}\,}}

\numberwithin{equation}{section}

\title{Almost h-semi-slant Riemannian maps}

\author{Kwang-Soon Park}
\address{Department of Mathematical Sciences, Seoul National University, Seoul 151-747, Republic of Korea}
\email{parkksn@gmail.com}
%\thanks{The research of the first author was supported by the National Research Foundation of Korea (NRF)
%grant funded by the Korea government (MEST)(No. 2009-0057445).}

\keywords{Riemannian map; semi-slant angle; integrable; harmonic map; totally geodesic}

\subjclass[2000]{53C15; 53C26.}   %Primary 57M50; Secondary 57N16, 53A20, 53C15}

\begin{abstract}
As a generalization of slant Riemannian maps \cite{S4}, semi-slant Riemannian maps \cite{P6}, almost h-slant submersions \cite{P},
and almost h-semi-slant submersions \cite{P3},
 we introduce the notion of almost h-semi-slant Riemannian maps from
almost quaternionic Hermitian manifolds to Riemannian manifolds. We investigate the integrability of distributions,
the harmonicity of such maps, the geometry of fibers,
 etc. We also deal with the condition for such maps to be totally geodesic and study some decomposition theorems.
Moreover, we give some examples.
\end{abstract}

\maketitle
\section{Introduction}\label{intro}
\addcontentsline{toc}{section}{Introduction}

Let $F$ be a $C^{\infty}$-map from a Riemannian manifold  $(M,g_M)$ to a Riemannian manifold $(N,g_N)$, according to the conditions on the map $F$,
the map $F$ is said to be a harmonic map \cite{BW}, a totally geodesic map \cite{BW}, an isometric immersion \cite{C}, a Riemannian submersion
(\cite{O}, \cite{W}, \cite{FIP}), a Riemannian map \cite{F}, etc. As we know,
if we consider the notions of an isometric immersion and a Riemannian submersion as the Riemannian
generalization of the notions of an  immersion and a submersion, then the notion of a Riemannian map may be the Riemannian generalization
of the notion of a subimmersion \cite{F}.

Given an isometric immersion $F$, its research is originated from Gauss' work, which studied surfaces in the Euclidean space $\mathbb{R}^3$
and there are lots of papers and books on this topic. In particular, B. Y. Chen introduced and studied some notions: generic submanifolds \cite{C0} and
slant submanifolds \cite{C1}. The notion of generic submanifolds contains the notions of real hypersurfaces, complex submanifolds, totally
real submanifolds, anti-holomorphic submanifolds, purely real submanifolds, and CR-submanifolds. And the notion of slant submanifolds has some similarities
with the notions of  slant submersions \cite{S}, semi-slant submersions \cite{PP}, almost h-slant submersions
\cite{P}, almost h-semi-slant submersions \cite{P3}, slant Riemannian maps \cite{S4}, and  semi-slant Riemannian maps \cite{P6}.
For the Riemannian submersion $F$, B. O'Neill \cite{O} and A. Gray \cite{G} firstly studied the map $F$.
Since then, there are several kinds of Riemannian submersions (\cite{P6}, references therein).
A. Fischer \cite{F} defined a Riemannian map $F$, which generalizes and unifies the notions of an isometric immersion, a Riemannian submersion, and an isometry. After that, there are a lot of papers on this topic. Moreover, B. Sahin introduced a slant Riemannian map \cite{S4} and the author defined
a semi-slant Riemannian map \cite{P6}.
As a generalization of slant Riemannian maps \cite{S4}, semi-slant Riemannian maps \cite{P6}, almost h-slant submersions \cite{P},
and almost h-semi-slant submersions \cite{P3}, we will define an almost h-semi-slant Riemannian map and a h-semi-slant Riemannian map.
And as we know, the quaternionic K\"{a}hler manifolds have applications in physics as the target
spaces for nonlinear $\sigma-$models with supersymmetry \cite{CMMS}.

The paper is organized as follows. In section 2 we recall some notions, which are needed for later use. In section 3
we define the notions of an almost h-semi-slant Riemannian map and a h-semi-slant Riemannian map and obtain some properties on them.
In section 4 using both an almost h-semi-slant Riemannian map and a h-semi-slant Riemannian map, we get some decomposition theorems.
In section 5  we obtain some examples.

\section{Preliminaries}\label{prelim}

Let $(M,E,g)$ be an almost quaternionic Hermitian manifold, where
$M$ is a $4m-$dimensional differentiable manifold, $g$ is a
Riemannian metric on $M$, and $E$ is a rank 3 subbundle of
$\text{End} (TM)$ such that for any point $p\in M$ with its some
neighborhood $U$, there exists a local basis $\{ J_1,J_2,J_3 \}$
of sections of $E$ on $U$ satisfying for all $\alpha\in \{ 1,2,3 \}$
\begin{equation}\label{eq: quat1}
J_{\alpha}^2=-id, \quad
J_{\alpha}J_{\alpha+1}=-J_{\alpha+1}J_{\alpha}=J_{\alpha+2},
\end{equation}
\begin{equation}\label{eq: Herm1}
g(J_{\alpha}X, J_{\alpha}Y)=g(X, Y)
\end{equation}
for all vector fields  $X, Y\in \Gamma(TM)$, where the indices are
taken from $\{ 1,2,3 \}$ modulo 3. The above basis $\{ J_1,J_2,J_3
\}$ is said to be a {\em quaternionic Hermitian basis}. We call
$(M,E,g)$ a {\em quaternionic K\"{a}hler manifold} if there exist
locally defined 1-forms $\omega_1, \omega_2, \omega_3$ such that
for $\alpha \in \{ 1,2,3 \}$
\begin{equation}\label{eq: quat2}
\nabla_X J_{\alpha} =
\omega_{\alpha+2}(X)J_{\alpha+1}-\omega_{\alpha+1}(X)J_{\alpha+2}
\end{equation}
for any vector field $X\in \Gamma(TM)$, where the indices are
taken from $\{ 1,2,3 \}$ modulo 3. If there exists a global
parallel quaternionic Hermitian basis $\{ J_1,J_2,J_3 \}$ of
sections of $E$ on $M$, then $(M, E, g )$ is said to be {\em
hyperk\"{a}hler}. Furthermore, we call $(J_1, J_2, J_3, g )$ a
{\em hyperk\"{a}hler structure} on $M$ and $g$ a {\em
hyperk\"{a}hler metric}.

Let $(M,g_M)$ and $(N,g_N)$ be Riemannian manifolds, where $M$, $N$ are $C^{\infty}$-manifolds and
$g_M$, $g_N$ are Riemannian metrics on $M$, $N$, respectively. Let $F : (M,g_M) \mapsto (N,g_N)$ be a $C^{\infty}$-map.
We call the map $F$ a {\em $C^{\infty}$-submersion} if $F$ is surjective and the differential $(F_*)_p$  has a maximal rank for any $p\in M$.
The map $F$ is said to be a {\em Riemannian submersion} \cite{O} if $F$ is a $C^{\infty}$-submersion and
$(F_*)_p : ((\ker (F_*)_p)^{\perp}, (g_M)_p) \mapsto (T_{F(p)} N, (g_N)_{F(p)})$ is a linear isometry for each $p\in M$,
where $(\ker (F_*)_p)^{\perp}$ is the orthogonal complement of the space $\ker (F_*)_p$ in the tangent space $T_p M$ of $M$ at $p$.
We call the map $F$ a {\em Riemannian map} \cite{F} if $(F_*)_p : ((\ker (F_*)_p)^{\perp}, (g_M)_p) \mapsto ((range F_*)_{F(p)}, (g_N)_{F(p)})$
is a linear isometry for each $p\in M$, where $(range F_*)_{F(p)} := (F_*)_p ((\ker (F_*)_p)^{\perp})$ for $p\in M$.

Let $(M,g_M,J)$ be an almost Hermitian manifold and $(N,g_N)$ a Riemannian manifold, where $J$ is an almost complex structure on $M$.
Let $F : (M,g_M,J) \mapsto (N,g_N)$ be a $C^{\infty}$-map. We call the map $F$ a {\em slant submersion} \cite{S} if $F$ is a Riemannian submersion
and the angle $\theta = \theta (X)$ between $JX$ and the space $\ker (F_*)_p$ is constant for nonzero $X\in \ker (F_*)_p$
and $p\in M$.

We call the angle $\theta$ a {\em slant angle}.

The map $F$ is said to be a {\em semi-slant submersion}
\cite{PP} if $F$ is a Riemannian submersion and there is a
distribution $\mathcal{D}_1\subset \ker F_*$ such that
$$
\ker F_* =\mathcal{D}_1\oplus \mathcal{D}_2, \
J(\mathcal{D}_1)=\mathcal{D}_1,
$$
and the angle $\theta=\theta(X)$ between $JX$ and the space
$(\mathcal{D}_2)_q$ is constant for nonzero $X\in
(\mathcal{D}_2)_q$ and $q\in M$, where $\mathcal{D}_2$ is the
orthogonal complement of $\mathcal{D}_1$ in $\ker F_*$.

We call the angle $\theta$ a {\em semi-slant angle}.

We call the map $F$ a {\em slant Riemannian map} \cite{S4} if $F$ is a Riemannian map and the angle $\theta = \theta (X)$ between $JX$
and the space $\ker (F_*)_p$ is constant for nonzero $X\in \ker (F_*)_p$ and $p\in M$.

We call the angle $\theta$ a {\em slant angle}.

The map $F$ is said to be a {\em semi-slant Riemannian map}
\cite{P6} if $F$ is a Riemannian map and there is a
distribution $\mathcal{D}_1\subset \ker F_*$ such that
$$
\ker F_* =\mathcal{D}_1\oplus \mathcal{D}_2, \
J(\mathcal{D}_1)=\mathcal{D}_1,
$$
and the angle $\theta=\theta(X)$ between $JX$ and the space
$(\mathcal{D}_2)_q$ is constant for nonzero $X\in
(\mathcal{D}_2)_q$ and $q\in M$, where $\mathcal{D}_2$ is the
orthogonal complement of $\mathcal{D}_1$ in $\ker F_*$.

We call the angle $\theta$ a {\em semi-slant angle}.

Let $(M, E, g_M)$ be an almost
quaternionic Hermitian manifold and $(N, g_N)$ a Riemannian
manifold. A Riemannian submersion $F : (M, E, g_M) \mapsto (N,
g_N)$ is said to be an {\em almost h-slant submersion} \cite{P} if given a
point $p\in M$ with its some neighborhood $U$, there exists a
 quaternionic Hermitian basis $\{ I,J,K \}$ of sections of
$E$ on $U$ such that for $R\in \{ I,J,K \}$ the angle
$\theta_R = \theta_R(X)$ between $RX$ and the space $\ker (F_*)_q$ is
constant for nonzero $X\in \ker (F_*)_q$ and $q\in U$.

We call such a basis $\{ I,J,K \}$ an {\em almost h-slant basis}
and the angles $\{ \theta_I,\theta_J,\theta_K \}$ {\em
almost h-slant angles}..

A Riemannian submersion $F : (M, E, g_M) \mapsto (N, g_N)$ is
called a {\em h-slant submersion} \cite{P} if given a point $p\in M$ with
its some neighborhood $U$, there exists a  quaternionic Hermitian
basis $\{ I,J,K \}$ of sections of $E$ on $U$ such that for $R\in
\{ I,J,K \}$ the angle $\theta_R = \theta_R(X)$ between $RX$ and the space
$\ker (F_*)_q$ is constant for nonzero $X\in \ker (F_*)_q$ and
$q\in U$, and $\theta=\theta_I=\theta_J=\theta_K$.

We call such a basis $\{ I,J,K \}$ a {\em h-slant basis} and the
angle $\theta$ a {\em h-slant angle}.

A Riemannian submersion $F : (M,E,g_M) \mapsto (N,g_N)$ is called a {\em h-semi-slant
submersion} \cite{P3} if given a point $p\in M$ with its some neighborhood
$U$, there exists a  quaternionic Hermitian basis $\{ I,J,K \}$ of
sections of $E$ on $U$ such that for any $R\in \{ I,J,K \}$, there
is a distribution $\mathcal{D}_1 \subset \ker F_*$ on $U$ such
that
$$
\ker F_* =\mathcal{D}_1\oplus \mathcal{D}_2, \
R(\mathcal{D}_1)=\mathcal{D}_1,
$$
and the angle $\theta_R=\theta_R(X)$ between $RX$ and the space
$(\mathcal{D}_2)_q$ is constant for nonzero $X\in
(\mathcal{D}_2)_q$ and $q\in U$, where $\mathcal{D}_2$ is the
orthogonal complement of $\mathcal{D}_1$ in $\ker F_*$.

We call such a basis $\{ I,J,K \}$ a {\em h-semi-slant basis} and
the angles $\{ \theta_I,\theta_J,\theta_K \}$ {\em h-semi-slant
angles}.

Furthermore, if we have
$$
\theta=\theta_I=\theta_J=\theta_K,
$$
then we call the map $F : (M,E,g_M)\mapsto (N,g_N)$ a {\em
strictly h-semi-slant submersion}, $\{ I,J,K \}$ a {\em strictly
h-semi-slant basis}, and the angle $\theta$ a {\em strictly
h-semi-slant angle}.

A Riemannian submersion $F : (M,E,g_M)\mapsto (N,g_N)$ is called an {\em almost h-semi-slant
submersion} \cite{P3} if given a point $p\in M$ with its some neighborhood
$U$, there exists a  quaternionic Hermitian basis $\{ I,J,K \}$ of
sections of $E$ on $U$ such that for each $R\in \{ I,J,K \}$,
there is a distribution $\mathcal{D}_1^R \subset \ker F_*$ on $U$
such that
$$
\ker F_* =\mathcal{D}_1^R\oplus \mathcal{D}_2^R, \
R(\mathcal{D}_1^R)=\mathcal{D}_1^R,
$$
and the angle $\theta_R=\theta_R(X)$ between $RX$ and the space
$(\mathcal{D}_2^R)_q$ is constant for nonzero $X\in
(\mathcal{D}_2^R)_q$ and $q\in U$, where $\mathcal{D}_2^R$ is the
orthogonal complement of $\mathcal{D}_1^R$ in $\ker F_*$.

We call such a basis $\{ I,J,K \}$ an {\em almost h-semi-slant
basis} and the angles $\{ \theta_I,\theta_J,\theta_K \}$ {\em
almost h-semi-slant angles}.

Let $(M, E_M, g_M)$ and $(N, E_N, g_N)$ be almost quaternionic
Hermitian manifolds. A map $F : M \mapsto N$ is called a {\em
$(E_M,E_N)-$holomorphic map} if given a point $x\in M$, for any
$J\in (E_M)_x$ there exists $J'\in (E_N)_{F(x)}$ such that
\begin{equation}\label{eq: holo1}
F_*\circ J=J'\circ F_*.
\end{equation}
A Riemannian submersion $F : M \mapsto N$ which is a
$(E_M,E_N)-$holomorphic map is called a {\em quaternionic
submersion}. Moreover, if $(M, E_M, g_M)$ is a quaternionic
K\"{a}hler manifold (or a hyperk\"{a}hler manifold), then we say
that $F$ is a {\em quaternionic K\"{a}hler submersion} (or a {\em
hyperk\"{a}hler submersion}) \cite{IMV}.

Let $F : (M, g_M) \mapsto (N, g_N)$ be
a $C^{\infty}$-map. The second fundamental form of $F$ is given by
\begin{equation}\label{eq: second}
(\nabla F_*)(X,Y) := \nabla^F _X F_* Y-F_* (\nabla _XY) \quad
\text{for} \ X,Y\in \Gamma(TM),
\end{equation}
where $\nabla^F$ is the pullback connection and we denote
conveniently by $\nabla$ the Levi-Civita connections of the
metrics $g_M$ and $g_N$ \cite{BW}. Recall that $F$ is said to be {\em
harmonic} if we have the tension field $\tau (F) := trace (\nabla F_*)=0$ and we call the map $F$ a {\em
totally geodesic} map if $(\nabla F_*)(X,Y)=0$ for $X,Y\in \Gamma
(TM)$ \cite{BW}.
Denote the range of $F_*$ by $range F_*$ as a subset of the pullback bundle $F^{-1} TN$. With its orthogonal complement $(range F_*)^{\perp}$
we obtain the following decomposition
\begin{equation}\label{eq: decom11}
F^{-1} TN = range F_* \oplus (range F_*)^{\perp}.
\end{equation}
Moreover, we have
\begin{equation}\label{eq: decom12}
TM = \ker F_* \oplus (\ker F_*)^{\perp}.
\end{equation}
Then we easily get

\begin{lemma}\label{lem: hori}
Let $F$ be a Riemannian map from a Riemannian manifold $(M,g_M)$ to a Riemannian manifold $(N,g_N)$. Then
\begin{equation}\label{eq: horiz11}
(\nabla F_*)(X,Y) \in \Gamma((range F_*)^{\perp}) \quad \text{for} \ X,Y\in \Gamma((\ker F_*)^{\perp}).
\end{equation}
\end{lemma}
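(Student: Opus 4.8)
The plan is to reduce the statement to an orthogonality check. Since by definition $range F_* = F_*((\ker F_*)^{\perp})$, it suffices to prove that $g_N((\nabla F_*)(X,Y), F_*Z) = 0$ for all $X,Y,Z \in \Gamma((\ker F_*)^{\perp})$; the conclusion $(\nabla F_*)(X,Y) \in \Gamma((range F_*)^{\perp})$ then follows because $(\nabla F_*)(X,Y)$ is orthogonal to a spanning set of $range F_*$. For brevity I would write $T(X,Y,Z) := g_N((\nabla F_*)(X,Y), F_*Z)$.

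The key computation is to differentiate the Riemannian-map identity $g_N(F_*Y, F_*Z) = g_M(Y,Z)$, valid for horizontal $Y,Z$, in the direction of the horizontal field $X$. Using that $\nabla^F$ is compatible with $g_N$ on the left and that $\nabla$ is the Levi-Civita connection of $g_M$ on the right, one obtains
$$g_N(\nabla^F_X F_*Y, F_*Z) + g_N(F_*Y, \nabla^F_X F_*Z) = g_M(\nabla_X Y, Z) + g_M(Y, \nabla_X Z).$$
On the right-hand side I would rewrite $g_M(\nabla_X Y, Z)$ as $g_N(F_*(\nabla_X Y), F_*Z)$: since $Z$ is horizontal, only the horizontal part of $\nabla_X Y$ contributes, $F_*$ annihilates the vertical part, and the Riemannian-map identity converts the $g_M$-inner product of horizontal vectors into a $g_N$-inner product; the term $g_M(Y,\nabla_X Z)$ is handled symmetrically. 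Substituting the definition \eqref{eq: second} of the second fundamental form then yields $T(X,Y,Z) + T(X,Z,Y) = 0$, so that $T$ is antisymmetric in its last two arguments.

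Finally I would combine this with the symmetry of the second fundamental form, $(\nabla F_*)(X,Y) = (\nabla F_*)(Y,X)$, which holds because $\nabla^F_X F_*Y - \nabla^F_Y F_*X = F_*[X,Y]$ (torsion-freeness pulled back); thus $T$ is symmetric in its first two arguments. A standard permutation argument, chaining the symmetry in slots $1,2$ with the antisymmetry in slots $2,3$, forces $T(X,Y,Z) = -T(X,Y,Z)$, whence $T \equiv 0$ and the lemma follows.

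The main obstacle I anticipate is the careful bookkeeping on the right-hand side of the differentiated identity: one must verify that the vertical component of $\nabla_X Y$ genuinely drops out, using that $Z$ is horizontal and that $F_*$ annihilates vertical vectors, before invoking the isometry property, since that property holds only on $(\ker F_*)^{\perp}$ and not on all of $TM$. Everything else is a routine consequence of metric compatibility and the two symmetry relations.
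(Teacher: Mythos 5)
Your proof is correct: the antisymmetry $T(X,Y,Z)=-T(X,Z,Y)$ obtained by differentiating the isometry identity $g_N(F_*Y,F_*Z)=g_M(Y,Z)$ along horizontal $X$ (with the careful removal of the vertical part of $\nabla_XY$, which you rightly flag), combined with the symmetry $(\nabla F_*)(X,Y)=(\nabla F_*)(Y,X)$ and the permutation argument, does force $T\equiv 0$, and tensoriality of $T$ in all three slots justifies passing from vector fields to the pointwise statement. The paper offers no written proof (the lemma is prefaced only by ``Then we easily get''), and your argument is exactly the standard one for this fact, so it matches what the paper takes for granted.
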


\begin{lemma}
Let $F$ be a Riemannian map from a Riemannian manifold $(M,g_M)$ to a Riemannian manifold $(N,g_N)$. Then
the map $F$ satisfies a generalized eikonal equation \cite{F}
\begin{equation}\label{eq: eik1}
2e(F) = ||F_*||^2 = \rank F.
\end{equation}
\end{lemma}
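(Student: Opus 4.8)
The plan is to establish both equalities pointwise and reduce everything to a single evaluation of $\|F_*\|^2$ in a frame adapted to the decomposition \eqref{eq: decom12}. The first equality $2e(F)=\|F_*\|^2$ is essentially a matter of definition: the energy density of a smooth map is $e(F)=\tfrac12\operatorname{trace}_{g_M}(F^*g_N)=\tfrac12\|F_*\|^2$, where $\|F_*\|^2=\sum_i g_N(F_*e_i,F_*e_i)$ for any local orthonormal frame $\{e_i\}$ of $TM$. So this part requires only recalling the definitions of $e(F)$ and of the Hilbert--Schmidt norm $\|F_*\|^2$, and carries over to any $C^\infty$-map.

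For the second equality I would fix a point $p\in M$ and choose an orthonormal basis of $T_pM$ adapted to $T_pM=\ker(F_*)_p\oplus(\ker(F_*)_p)^{\perp}$. Concretely, set $r=\dim(\ker(F_*)_p)^{\perp}$, pick an orthonormal basis $\{e_1,\dots,e_r\}$ of $(\ker(F_*)_p)^{\perp}$ and extend it by an orthonormal basis $\{e_{r+1},\dots,e_n\}$ of $\ker(F_*)_p$, where $n=\dim M$. Since $(F_*)_p$ vanishes on $\ker(F_*)_p$, every term with index $>r$ contributes nothing to the frame sum defining $\|(F_*)_p\|^2$.

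The decisive step is to invoke the defining property of a Riemannian map: by hypothesis $(F_*)_p$ restricts to a linear isometry from $((\ker(F_*)_p)^{\perp},(g_M)_p)$ onto $((\operatorname{range}F_*)_{F(p)},(g_N)_{F(p)})$. Hence $g_N((F_*)_p e_i,(F_*)_p e_i)=g_M(e_i,e_i)=1$ for each $i\le r$, and summing over the adapted frame gives $\|(F_*)_p\|^2=\sum_{i=1}^r 1=r$. Because $(F_*)_p$ is injective on $(\ker(F_*)_p)^{\perp}$ and zero on $\ker(F_*)_p$, its rank equals $r$, so $\|F_*\|^2=\rank F$ at $p$; as $p$ is arbitrary, the identity holds throughout $M$.

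I do not anticipate a genuine obstacle here: the result is a direct consequence of the isometry condition built into the definition of a Riemannian map. The only point that deserves a little care is the bookkeeping identifying $r=\dim(\ker(F_*)_p)^{\perp}$ simultaneously with the number of nonvanishing terms in the frame sum and with $\rank F$. One should also remark that for a Riemannian map $\rank F$ is locally constant, so the pointwise computation yields the stated generalized eikonal equation as a global identity on each connected component.
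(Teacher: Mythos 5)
Your proof is correct. Note that the paper itself states this lemma without any proof, simply citing Fischer \cite{F}; your argument---identifying $2e(F)=\|F_*\|^2$ as definitional and then computing the Hilbert--Schmidt norm in an orthonormal frame adapted to $TM=\ker F_*\oplus(\ker F_*)^{\perp}$, where the isometry condition on $(\ker F_*)^{\perp}$ makes each of the $r=\dim(\ker F_*)^{\perp}$ surviving terms equal to $1$ and rank--nullity gives $r=\rank F$---is exactly the standard argument behind that citation, so your write-up correctly supplies the omitted details, including the remark that $\rank F$ is locally constant.
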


As we can see, $||F_*||^2$ is continuous on $M$ and $\rank F$ is an integer-valued function on $M$ so that
$\rank F$ is locally constant. Hence, if $M$ is connected, then $\rank F$ is a constant function \cite{F}.
In \cite{F}, A. Fischer suggested that using (\ref{eq: eik1}), we may build a quantum model of nature. And if we can do it,
then there will be an interesting relationship between the mathematical side from Riemannian maps, harmonic maps, and Lagrangian
field theory and the physical side from Maxwell's equation, Schr\"{o}dinger's equation, and their proposed generalization.

\section{Almost h-semi-slant Riemannian maps}\label{semi}

\begin{definition}
Let $(M,E,g_M)$ be an almost quaternionic Hermitian manifold and
$(N,g_N)$ a Riemannian manifold. A Riemannian map $F :
(M,E,g_M)\mapsto (N,g_N)$ is called a {\em h-semi-slant
Riemannian map} if given a point $p\in M$ with its some neighborhood
$U$, there exists a  quaternionic Hermitian basis $\{ I,J,K \}$ of
sections of $E$ on $U$ such that for any $R\in \{ I,J,K \}$, there
is a distribution $\mathcal{D}_1 \subset \ker F_*$ on $U$ such
that
$$
\ker F_* =\mathcal{D}_1\oplus \mathcal{D}_2, \
R(\mathcal{D}_1)=\mathcal{D}_1,
$$
and the angle $\theta_R=\theta_R(X)$ between $RX$ and the space
$(\mathcal{D}_2)_q$ is constant for nonzero $X\in
(\mathcal{D}_2)_q$ and $q\in U$, where $\mathcal{D}_2$ is the
orthogonal complement of $\mathcal{D}_1$ in $\ker F_*$.
\end{definition}

We call such a basis $\{ I,J,K \}$ a {\em h-semi-slant basis} and
the angles $\{ \theta_I,\theta_J,\theta_K \}$ {\em h-semi-slant
angles}.

Furthermore, if we have
$$
\theta=\theta_I=\theta_J=\theta_K,
$$
then we call the map $F : (M,E,g_M)\mapsto (N,g_N)$ a {\em
strictly h-semi-slant Riemannian map}, $\{ I,J,K \}$ a {\em strictly
h-semi-slant basis}, and the angle $\theta$ a {\em strictly
h-semi-slant angle}.

\begin{definition}
Let $(M,E,g_M)$ be an almost quaternionic Hermitian manifold and
$(N,g_N)$ a Riemannian manifold. A Riemannian map $F :
(M,E,g_M)\mapsto (N,g_N)$ is called an {\em almost h-semi-slant
Riemannian map} if given a point $p\in M$ with its some neighborhood
$U$, there exists a  quaternionic Hermitian basis $\{ I,J,K \}$ of
sections of $E$ on $U$ such that for each $R\in \{ I,J,K \}$,
there is a distribution $\mathcal{D}_1^R \subset \ker F_*$ on $U$
such that
$$
\ker F_* =\mathcal{D}_1^R\oplus \mathcal{D}_2^R, \
R(\mathcal{D}_1^R)=\mathcal{D}_1^R,
$$
and the angle $\theta_R=\theta_R(X)$ between $RX$ and the space
$(\mathcal{D}_2^R)_q$ is constant for nonzero $X\in
(\mathcal{D}_2^R)_q$ and $q\in U$, where $\mathcal{D}_2^R$ is the
orthogonal complement of $\mathcal{D}_1^R$ in $\ker F_*$.
\end{definition}

We call such a basis $\{ I,J,K \}$ an {\em almost h-semi-slant
basis} and the angles $\{ \theta_I,\theta_J,\theta_K \}$ {\em
almost h-semi-slant angles}.

\noindent Let $F : (M,E,g_M)\mapsto (N,g_N)$ be an almost
h-semi-slant Riemannian map. Then given a point $p\in M$ with its some
neighborhood $U$, there exists a  quaternionic Hermitian basis $\{
I,J,K \}$ of sections of $E$ on $U$ such that for each $R\in \{
I,J,K \}$, there is a distribution $\mathcal{D}_1^R \subset \ker
F_*$ on $U$ such that
$$
\ker F_* =\mathcal{D}_1^R\oplus \mathcal{D}_2^R, \
R(\mathcal{D}_1^R)=\mathcal{D}_1^R,
$$
and the angle $\theta_R=\theta_R(X)$ between $RX$ and the space
$(\mathcal{D}_2^R)_q$ is constant for nonzero $X\in
(\mathcal{D}_2^R)_q$ and $q\in U$, where $\mathcal{D}_2^R$ is the
orthogonal complement of $\mathcal{D}_1^R$ in $\ker F_*$.

Then for $X\in \Gamma(\ker F_*)$, we write
\begin{equation}\label{eq: proj1}
X = P_RX+Q_RX,
\end{equation}
where $P_RX\in \Gamma(\mathcal{D}_1^R)$ and $Q_RX\in
\Gamma(\mathcal{D}_2^R)$.

For $X\in \Gamma(\ker F_*)$, we obtain
\begin{equation}\label{eq: proj2}
RX = \phi_R X+\omega_R X,
\end{equation}
where $\phi_R X\in \Gamma(\ker F_*)$ and $\omega_R X\in
\Gamma((\ker F_*)^{\perp})$.

For $Z\in \Gamma((\ker F_*)^{\perp})$, we have
\begin{equation}\label{eq: proj3}
RZ = B_RZ+C_RZ,
\end{equation}
where $B_RZ\in \Gamma(\ker F_*)$ and $C_RZ\in \Gamma((\ker
F_*)^{\perp})$.

For $U\in \Gamma(TM)$, we get
\begin{equation}\label{eq: proj4}
U = \mathcal{V}U+\mathcal{H}U,
\end{equation}
where $\mathcal{V}U\in \Gamma(\ker F_*)$ and $\mathcal{H}U\in
\Gamma((\ker F_*)^{\perp})$.

For $W\in \Gamma(F^{-1}TN)$, we write
\begin{equation}\label{eq: proj5}
W = \bar{P}W+\bar{Q}W,
\end{equation}
where $\bar{P}W\in \Gamma(range F_*)$ and $\bar{Q}W\in \Gamma((range F_*)^{\perp})$.

Then
\begin{equation}\label{eq: proj6}
(\ker F_*)^{\perp} = \omega_R \mathcal{D}_2^R \oplus \mu_R,
\end{equation}
where $\mu_R$ is the orthogonal complement of $\omega_R
\mathcal{D}_2^R$ in $(\ker F_*)^{\perp}$ and is invariant  under
$R$.

Furthermore,
{\setlength\arraycolsep{2pt}
\begin{eqnarray*}
& & \phi_R \mathcal{D}_1^R = \mathcal{D}_1^R, \ \omega_R
\mathcal{D}_1^R = 0, \ \phi_R \mathcal{D}_2^R \subset
\mathcal{D}_2^R, \
B_R((\ker F_*)^{\perp}) = \mathcal{D}_2^R        \\
& & \phi_R^2+B_R\omega_R = -id, \ C_R^2+\omega_R B_R = -id, \
\omega_R \phi_R +C_R\omega_R = 0, \ B_RC_R+\phi_R B_R = 0.
\end{eqnarray*}}
Define the tensors $\mathcal{T}$ and $\mathcal{A}$ by
\begin{eqnarray}
  \mathcal{A}_E F & = & \mathcal{H}\nabla_{\mathcal{H}E} \mathcal{V}F+\mathcal{V}\nabla_{\mathcal{H}E} \mathcal{H}F \label{eq: oten1} \\
   \mathcal{T}_E F & = & \mathcal{H}\nabla_{\mathcal{V}E} \mathcal{V}F+\mathcal{V}\nabla_{\mathcal{V}E} \mathcal{H}F \label{eq: oten2}
\end{eqnarray}
for $E, F\in \Gamma(TM)$, where $\nabla$ is the Levi-Civita
connection of $g_M$.

For $X,Y\in \Gamma(\ker F_*)$, define
\begin{equation}\label{eq: vert}
\widehat{\nabla}_X Y := \mathcal{V}\nabla_X Y
\end{equation}
\begin{eqnarray}
(\nabla_X \phi)Y & := & \widehat{\nabla}_X \phi Y-\phi
\widehat{\nabla}_X Y \label{eq: vertc}  \\
(\nabla_X \omega)Y & := & \mathcal{H}\nabla_X \omega
Y-\omega\widehat{\nabla}_X Y. \label{eq: horizc}
\end{eqnarray}
Then we easily obtain

\begin{lemma}
Let $F$ be an almost h-semi-slant Riemannian map from a
hyperk\"{a}hler manifold $(M,I,J,K,g_M)$ to a Riemannian
manifold $(N, g_N)$ such that $(I,J,K)$ is an almost h-semi-slant
basis. Then we get

\begin{enumerate}
\item
\begin{align*}
  &\widehat{\nabla}_X \phi_R Y+\mathcal{T}_X \omega_R Y = \phi_R\widehat{\nabla}_X Y+B_R\mathcal{T}_X Y    \\
  &\mathcal{T}_X \phi_R Y+\mathcal{H}\nabla_X \omega_R Y =
  \omega_R\widehat{\nabla}_X Y+C_R\mathcal{T}_X Y
\end{align*}
for $X,Y\in \Gamma(\ker F_*)$ and $R\in \{ I,J,K \}$.
\item
\begin{align*}
  &\mathcal{V}\nabla_Z B_RW+\mathcal{A}_Z C_RW = \phi_R\mathcal{A}_Z W+B_R\mathcal{H}\nabla_Z W    \\
  &\mathcal{A}_Z B_RW+\mathcal{H}\nabla_Z C_RW = \omega_R\mathcal{A}_Z
  W+C_R\mathcal{H}\nabla_Z W
\end{align*}
for $Z,W\in \Gamma((\ker F_*)^{\perp})$ and $R\in \{ I,J,K \}$.
\item
\begin{align*}
  &\widehat{\nabla}_X B_RZ+\mathcal{T}_X C_RZ = \phi_R\mathcal{T}_X Z+B_R\mathcal{H}\nabla_X Z    \\
  &\mathcal{T}_X B_RZ+\mathcal{H}\nabla_X C_RZ = \omega_R\mathcal{T}_X Z+C_R\mathcal{H}\nabla_X Z  \\
  &\mathcal{V}\nabla_Z \phi_R X + \mathcal{A}_Z \omega_R X = \phi_R \mathcal{V}\nabla_Z X + B_R\mathcal{A}_Z X     \\
  &\mathcal{A}_Z \phi_R X + \mathcal{H}\nabla_Z \omega_R X = \omega_R\mathcal{V}\nabla_Z X + C_R\mathcal{A}_Z X
\end{align*}
for $X\in \Gamma(\ker F_*)$, $Z\in \Gamma((\ker F_*)^{\perp})$,
and $R\in \{ I,J,K \}$.
\end{enumerate}
\end{lemma}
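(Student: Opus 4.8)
The plan is to exploit the single structural hypothesis that distinguishes this lemma from a formal identity, namely that $M$ is hyperk\"{a}hler. On a hyperk\"{a}hler manifold the complex structures $I,J,K$ are parallel, so each $R\in\{I,J,K\}$ satisfies $\nabla_E(RV)=R(\nabla_E V)$ for all $E,V\in\Gamma(TM)$. Every one of the six displayed equations will come from applying this one parallelism identity to a suitable pair of vector fields and then projecting onto the vertical and horizontal subbundles. Concretely, I would first record the elementary consequences of the definitions: for $X,Y\in\Gamma(\ker F_*)$ one has $\nabla_X Y=\widehat{\nabla}_X Y+\mathcal{T}_X Y$, for $Z\in\Gamma((\ker F_*)^{\perp})$ one has $\nabla_X Z=\mathcal{T}_X Z+\mathcal{H}\nabla_X Z$, and symmetrically $\nabla_Z X=\mathcal{V}\nabla_Z X+\mathcal{A}_Z X$ and $\nabla_Z W=\mathcal{A}_Z W+\mathcal{H}\nabla_Z W$; these follow directly from (\ref{eq: oten1}), (\ref{eq: oten2}), and (\ref{eq: vert}) by checking which summand in the tensor definition survives for each combination of vertical/horizontal arguments.

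Part (1) would serve as the model computation. Taking $X,Y\in\Gamma(\ker F_*)$, I would write $\nabla_X(RY)=R(\nabla_X Y)$ and expand both sides. On the left, substitute $RY=\phi_R Y+\omega_R Y$ from (\ref{eq: proj2}); since $\phi_R Y$ is vertical and $\omega_R Y$ is horizontal, the elementary identities above give $\nabla_X(\phi_R Y)=\widehat{\nabla}_X\phi_R Y+\mathcal{T}_X\phi_R Y$ and $\nabla_X(\omega_R Y)=\mathcal{T}_X\omega_R Y+\mathcal{H}\nabla_X\omega_R Y$. On the right, substitute $\nabla_X Y=\widehat{\nabla}_X Y+\mathcal{T}_X Y$ and then apply $R$, decomposing the vertical piece $\widehat{\nabla}_X Y$ by (\ref{eq: proj2}) and the horizontal piece $\mathcal{T}_X Y$ by (\ref{eq: proj3}), yielding $R(\nabla_X Y)=\phi_R\widehat{\nabla}_X Y+\omega_R\widehat{\nabla}_X Y+B_R\mathcal{T}_X Y+C_R\mathcal{T}_X Y$. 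Equating the two expressions and comparing vertical components gives the first equation of (1), while comparing horizontal components gives the second.

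Parts (2) and (3) are the same maneuver with different choices of arguments and the corresponding decomposition (\ref{eq: proj3}) for horizontal inputs. For (2) I would apply parallelism to $\nabla_Z(RW)=R(\nabla_Z W)$ with $Z,W\in\Gamma((\ker F_*)^{\perp})$, using $RW=B_R W+C_R W$ on the left and the $\mathcal{A}$-decomposition of $\nabla_Z W$ on the right. For the first two equations of (3) I would use $\nabla_X(RZ)=R(\nabla_X Z)$ with $X$ vertical and $Z$ horizontal, and for the last two I would use $\nabla_Z(RX)=R(\nabla_Z X)$ with the roles of the differentiating and differentiated fields interchanged; in each case one splits $RZ$ (resp.\ $RX$) by the appropriate projection formula and reads off the vertical and horizontal parts.

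Since the hyperk\"{a}hler hypothesis does all the conceptual work, I do not expect a genuine obstacle; the only thing to manage carefully is the bookkeeping of which O'Neill tensor ($\mathcal{T}$ or $\mathcal{A}$) and which projection ($\mathcal{V}$ or $\mathcal{H}$) records each covariant derivative, since this depends on whether the argument being differentiated is vertical or horizontal. A clean way to avoid errors is to tabulate the four elementary identities from the first paragraph once and then invoke them mechanically in each case, so that every equation reduces to equating like-typed (vertical or horizontal) terms on the two sides of the parallelism identity.
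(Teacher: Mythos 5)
Your proposal is correct and is precisely the intended argument: the paper states this lemma without proof (``Then we easily obtain''), and the expected computation is exactly yours --- use $\nabla R=0$ for $R\in\{I,J,K\}$ on the hyperk\"ahler manifold, expand $\nabla_E(RV)=R(\nabla_E V)$ via the decompositions (\ref{eq: proj2}), (\ref{eq: proj3}) and the identities $\nabla_X Y=\widehat{\nabla}_X Y+\mathcal{T}_X Y$, $\nabla_X Z=\mathcal{T}_X Z+\mathcal{H}\nabla_X Z$, $\nabla_Z X=\mathcal{V}\nabla_Z X+\mathcal{A}_Z X$, $\nabla_Z W=\mathcal{A}_Z W+\mathcal{H}\nabla_Z W$, and compare vertical and horizontal components. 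All six identities check out as you derived them, so there is nothing to correct.
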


Using the h-semi-slant Riemannian map $F$, we have

\begin{theorem}
Let $F$ be a h-semi-slant Riemannian map from a hyperk\"{a}hler
manifold $(M,I,J,K,g_M)$ to a Riemannian manifold $(N, g_N)$
such that $(I,J,K)$ is a h-semi-slant basis. Then the following
conditions are equivalent:

a) the complex distribution $\mathcal{D}_1$ is integrable.

b) $ Q_I(\widehat{\nabla}_X \phi_I Y - \widehat{\nabla}_Y \phi_I
X) = 0 \ \text{and} \ \mathcal{T}_X \phi_I Y = \mathcal{T}_Y
\phi_I X \quad \text{for} \ X,Y\in \Gamma(\mathcal{D}_1).$

c) $ Q_J(\widehat{\nabla}_X \phi_J Y - \widehat{\nabla}_Y \phi_J
X) = 0 \ \text{and} \ \mathcal{T}_X \phi_J Y = \mathcal{T}_Y
\phi_J X \quad \text{for} \ X,Y\in \Gamma(\mathcal{D}_1).$

d) $ Q_K(\widehat{\nabla}_X \phi_K Y - \widehat{\nabla}_Y \phi_K
X) = 0 \ \text{and} \ \mathcal{T}_X \phi_K Y = \mathcal{T}_Y
\phi_K X \quad \text{for} \ X,Y\in \Gamma(\mathcal{D}_1).$
\end{theorem}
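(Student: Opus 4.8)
The plan is to prove that each of (b), (c), (d) is \emph{separately} equivalent to (a); since (a) makes no reference to a particular $R\in\{I,J,K\}$, the four statements are then mutually equivalent. Two preliminary reductions set up the computation. First, I would record that $\ker F_*$ is integrable for any Riemannian map: the second fundamental form is symmetric, and for vertical $X,Y$ equation (\ref{eq: second}) gives $F_*(\nabla_X Y)=-(\nabla F_*)(X,Y)$, so $F_*[X,Y]=(\nabla F_*)(Y,X)-(\nabla F_*)(X,Y)=0$ and hence $[X,Y]\in\Gamma(\ker F_*)$. Consequently, for $X,Y\in\Gamma(\mathcal{D}_1)$ the bracket $[X,Y]$ has no $(\ker F_*)^{\perp}$-component, and $\mathcal{D}_1$ is integrable precisely when $Q_R[X,Y]=0$. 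Second, since $\mathcal{D}_1$ is $R$-invariant we have $RX=\phi_R X$, $RY=\phi_R Y$ with $\omega_R X=\omega_R Y=0$, and $\phi_R X,\phi_R Y$ are again vertical.

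The core of the argument is a single identity obtained from the hyperk\"{a}hler hypothesis. Because $\nabla R=0$ and $[X,Y]=\nabla_X Y-\nabla_Y X$, I would compute
\[
R[X,Y]=\nabla_X\phi_R Y-\nabla_Y\phi_R X .
\]
Since both entries are vertical, $\nabla_X\phi_R Y=\widehat{\nabla}_X\phi_R Y+\mathcal{T}_X\phi_R Y$ by (\ref{eq: vert}) and (\ref{eq: oten2}), which splits the right-hand side into its $\ker F_*$-part $\widehat{\nabla}_X\phi_R Y-\widehat{\nabla}_Y\phi_R X$ and its $(\ker F_*)^{\perp}$-part $\mathcal{T}_X\phi_R Y-\mathcal{T}_Y\phi_R X$. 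On the other hand, writing $[X,Y]=P_R[X,Y]+Q_R[X,Y]$ and applying $R$, the relations $\phi_R\mathcal{D}_1=\mathcal{D}_1$, $\omega_R\mathcal{D}_1=0$, $\phi_R\mathcal{D}_2^R\subset\mathcal{D}_2^R$ show that the vertical part of $R[X,Y]$ is $\phi_R P_R[X,Y]+\phi_R Q_R[X,Y]$ and its horizontal part is $\omega_R Q_R[X,Y]$. Matching the two decompositions, and applying the projection $Q_R$ to the vertical equation (so that the $\mathcal{D}_1$-term drops out), yields the two key relations
\[
Q_R(\widehat{\nabla}_X\phi_R Y-\widehat{\nabla}_Y\phi_R X)=\phi_R Q_R[X,Y],\qquad \mathcal{T}_X\phi_R Y-\mathcal{T}_Y\phi_R X=\omega_R Q_R[X,Y].
\]

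With these identities the equivalences are immediate. If $\mathcal{D}_1$ is integrable then $Q_R[X,Y]=0$, so both right-hand sides vanish and (b), (c), (d) hold. Conversely, assuming (b), the two left-hand sides for $R=I$ vanish, giving $\phi_I Q_I[X,Y]=0$ and $\omega_I Q_I[X,Y]=0$; adding these, $I\,Q_I[X,Y]=\phi_I Q_I[X,Y]+\omega_I Q_I[X,Y]=0$, and since $I$ is invertible ($I^2=-\mathrm{id}$) this forces $Q_I[X,Y]=0$, i.e.\ $\mathcal{D}_1$ is integrable. The identical argument with $J$ and $K$ gives (c)$\Leftrightarrow$(a) and (d)$\Leftrightarrow$(a).

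I expect the main difficulty to be bookkeeping rather than any single deep step: one must verify up front that $[X,Y]$ is genuinely vertical (hence the integrability of $\ker F_*$ comes first), keep careful track of which projections annihilate which summand in the splitting $\ker F_*=\mathcal{D}_1\oplus\mathcal{D}_2^R$ and $TM=\ker F_*\oplus(\ker F_*)^{\perp}$, and use $R^2=-\mathrm{id}$ in the final inversion so that vanishing of $R\,Q_R[X,Y]$ returns vanishing of $Q_R[X,Y]$. The hyperk\"{a}hler hypothesis is used only through $\nabla R=0$; were $M$ merely quaternionic K\"{a}hler, the extra $\omega_{\alpha}$-terms of (\ref{eq: quat2}) in $\nabla_X RY$ would have to be carried through the computation.
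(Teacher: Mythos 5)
Your proposal is correct and takes essentially the same route as the paper's own proof: both use $\nabla R=0$ to compute $R[X,Y]=\nabla_X\phi_R Y-\nabla_Y\phi_R X=\widehat{\nabla}_X\phi_R Y-\widehat{\nabla}_Y\phi_R X+\mathcal{T}_X\phi_R Y-\mathcal{T}_Y\phi_R X$ and then compare this with the decomposition $R[X,Y]=\phi_R[X,Y]+\omega_R[X,Y]$. Your write-up simply makes explicit the bookkeeping (integrability of $\ker F_*$, the projection identities $Q_R(\widehat{\nabla}_X\phi_R Y-\widehat{\nabla}_Y\phi_R X)=\phi_R Q_R[X,Y]$ and $\mathcal{T}_X\phi_R Y-\mathcal{T}_Y\phi_R X=\omega_R Q_R[X,Y]$, and the invertibility of $R$) that the paper compresses into its concluding ``Hence, we get'' step.
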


\begin{proof}
Given $X,Y\in \Gamma(\mathcal{D}_1)$ and $R\in \{ I,J,K \}$, we obtain
\begin{align*}
R[X,Y]&= R(\nabla_X Y-\nabla_Y X)=\nabla_X RY-\nabla_Y RX    \\
      &= \widehat{\nabla}_X \phi_R Y-\widehat{\nabla}_Y \phi_R X +
      \mathcal{T}_X \phi_R Y-\mathcal{T}_Y \phi_R X.
\end{align*}
Hence, we get
$$
a) \Leftrightarrow b), \quad a) \Leftrightarrow c), \quad a)
\Leftrightarrow d).
$$
Therefore, the result follows.
\end{proof}

\begin{theorem}
Let $F$ be a h-semi-slant Riemannian map from a hyperk\"{a}hler
manifold $(M,I,J,K,g_M)$ to a Riemannian manifold $(N, g_N)$
such that $(I,J,K)$ is a h-semi-slant basis. Then the following
conditions are equivalent:

a) the slant distribution $\mathcal{D}_2$ is integrable.

b) $P_I(\widehat{\nabla}_X \phi_I Y-\widehat{\nabla}_Y \phi_I
X+\mathcal{T}_X \omega_I Y-\mathcal{T}_Y \omega_I X) = 0 \quad
\text{for} \ X,Y\in \Gamma(\mathcal{D}_2).$

c) $P_J(\widehat{\nabla}_X \phi_J Y-\widehat{\nabla}_Y \phi_J
X+\mathcal{T}_X \omega_J Y-\mathcal{T}_Y \omega_J X) = 0 \quad
\text{for} \ X,Y\in \Gamma(\mathcal{D}_2).$

d) $P_K(\widehat{\nabla}_X \phi_K Y-\widehat{\nabla}_Y \phi_K
X+\mathcal{T}_X \omega_K Y-\mathcal{T}_Y \omega_K X) = 0 \quad
\text{for} \ X,Y\in \Gamma(\mathcal{D}_2).$
\end{theorem}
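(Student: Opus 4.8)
The plan is to mirror the argument of the preceding theorem (the one on $\mathcal{D}_1$), but now feeding in two vector fields $X,Y\in\Gamma(\mathcal{D}_2)$ and extracting the $\mathcal{D}_1$-component of their bracket. Since $\ker F_*$ is the tangent distribution to the fibres of the Riemannian map $F$, it is integrable, so $[X,Y]\in\Gamma(\ker F_*)$ whenever $X,Y\in\Gamma(\ker F_*)$. Consequently $\mathcal{D}_2$ is integrable if and only if the $\mathcal{D}_1$-part of $[X,Y]$ vanishes, i.e. $P_R[X,Y]=0$ for all $X,Y\in\Gamma(\mathcal{D}_2)$, where $P_R$ is the projection in (\ref{eq: proj1}). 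The entire proof reduces to showing that condition b) (resp. c), d)) is equivalent to this vanishing.

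First I would compute $R[X,Y]$ just as before: because $M$ is hyperk\"ahler we have $\nabla R=0$, hence $R[X,Y]=\nabla_X(RY)-\nabla_Y(RX)$. Substituting $RX=\phi_R X+\omega_R X$ from (\ref{eq: proj2}), splitting each covariant derivative into vertical and horizontal parts via (\ref{eq: proj4}), (\ref{eq: vert}), and the tensor $\mathcal{T}$ of (\ref{eq: oten2}), and using that $\phi_R Y$ is vertical while $\omega_R Y$ is horizontal, the vertical component works out to
\[
\mathcal{V}(R[X,Y]) = \widehat{\nabla}_X \phi_R Y + \mathcal{T}_X \omega_R Y - \widehat{\nabla}_Y \phi_R X - \mathcal{T}_Y \omega_R X,
\]
which is precisely the expression appearing inside $P_R(\cdots)$ in conditions b), c), d).

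Next I would evaluate this same vertical component from the other side. Writing $[X,Y]=P_R[X,Y]+Q_R[X,Y]$ and applying $R$, the invariance relations $\phi_R\mathcal{D}_1=\mathcal{D}_1$, $\omega_R\mathcal{D}_1=0$, and $\phi_R\mathcal{D}_2\subset\mathcal{D}_2$ listed just before (\ref{eq: oten1}) give $\mathcal{V}(R[X,Y])=\phi_R P_R[X,Y]+\phi_R Q_R[X,Y]$, with the first summand in $\mathcal{D}_1$ and the second in $\mathcal{D}_2$. Applying $P_R$ and comparing with the previous formula then yields
\[
P_R\bigl(\widehat{\nabla}_X \phi_R Y + \mathcal{T}_X \omega_R Y - \widehat{\nabla}_Y \phi_R X - \mathcal{T}_Y \omega_R X\bigr) = \phi_R P_R[X,Y].
\]
Since $R^2=-id$ forces $\phi_R$ to be invertible on the invariant distribution $\mathcal{D}_1$, the right-hand side vanishes iff $P_R[X,Y]=0$. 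This gives b) $\Leftrightarrow$ a), and running the identical computation with $R=J$ and $R=K$ yields c) $\Leftrightarrow$ a) and d) $\Leftrightarrow$ a).

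The one genuinely delicate point — the only step beyond bookkeeping — is applying $P_R$ to $\mathcal{V}(R[X,Y])$: one must know that $P_R$ kills the term $\phi_R Q_R[X,Y]$ and fixes $\phi_R P_R[X,Y]$, which is exactly guaranteed by $\phi_R\mathcal{D}_2\subset\mathcal{D}_2$ together with $\phi_R\mathcal{D}_1=\mathcal{D}_1$, and then that invertibility of $\phi_R|_{\mathcal{D}_1}$ permits dropping $\phi_R$ to conclude $P_R[X,Y]=0$. Everything else is the routine vertical/horizontal splitting already exploited in the $\mathcal{D}_1$ case, so no new obstacle arises.
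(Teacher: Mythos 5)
Your proposal is correct and follows essentially the same route as the paper: both exploit $\nabla R=0$ to expand $R[X,Y]$ in terms of $\widehat{\nabla}$ and $\mathcal{T}$, use integrability of $\ker F_*$ to reduce a) to the vanishing of the $\mathcal{D}_1$-component of $[X,Y]$, and then identify that component with the expression in b), c), d). The only (cosmetic) difference is that the paper extracts the $\mathcal{D}_1$-component by pairing with test vectors $Z\in\Gamma(\mathcal{D}_1)$ via $g_M$, while you apply the projection $P_R$ directly and invoke the invertibility of $\phi_R$ on $\mathcal{D}_1$; both hinge on the same invariance relations.
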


\begin{proof}
Given $X,Y\in \Gamma(\mathcal{D}_2)$, $Z\in \Gamma(\mathcal{D}_1)$,
and $R\in \{ I,J,K \}$,  we obtain
\begin{align*}
g_M (R[X,Y], Z)&= g_M (\nabla_X RY-\nabla_Y RX, Z)    \\
      &= g_M (\widehat{\nabla}_X \phi_R Y+\mathcal{T}_X \phi_R Y+\mathcal{T}_X \omega_R Y
      +\mathcal{H}\nabla_X \omega_R Y-\widehat{\nabla}_Y\phi_R X   \\
      &\ \ \ -\mathcal{T}_Y \phi_R X  -\mathcal{T}_Y \omega_R X-\mathcal{H}\nabla_Y \omega_R X, Z)    \\
      &= g_M (\widehat{\nabla}_X \phi_R Y+\mathcal{T}_X \omega_R Y-\widehat{\nabla}_Y\phi_R X
      -\mathcal{T}_Y \omega_R X, Z).
\end{align*}
Since $[X,Y]\in \Gamma(\ker F_*)$, we get
$$
a) \Leftrightarrow b), \quad a) \Leftrightarrow c), \quad a)
\Leftrightarrow d).
$$
Therefore, we have the result.
\end{proof}

In the same way with the proof of Proposition 2.6 in \cite{P3}, we can show

\begin{proposition}\label{prop:slant}
Let $F$ be an almost h-semi-slant Riemannian map from an almost
quaternionic Hermitian manifold $(M,E,g_M)$ to a Riemannian
manifold $(N,g_N)$. Then we have
\begin{equation}\label{eq: slant11}
\phi_R^2 X = -\cos^2 \theta_R X \quad \text{for} \ X\in
\Gamma(\mathcal{D}_2^R) \ \text{and} \ R\in \{ I,J,K \},
\end{equation}
where $\{ I,J,K \}$ is an almost h-semi-slant basis with the
almost h-semi-slant angles $\{ \theta_I,\theta_J,\theta_K \}$.
\end{proposition}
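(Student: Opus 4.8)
The plan is to reduce the stated operator identity on $\mathcal{D}_2^R$ to a statement about the Rayleigh quotient of the symmetric endomorphism $-\phi_R^2$, exactly in the spirit of the slant computation in \cite{P3}. First I would fix $R\in\{I,J,K\}$ and a nonzero $X\in\Gamma(\mathcal{D}_2^R)$ and identify the orthogonal projection of $RX$ onto $\mathcal{D}_2^R$. Writing $RX=\phi_R X+\omega_R X$ as in (\ref{eq: proj2}), the component $\omega_R X$ lies in $(\ker F_*)^{\perp}$ and is therefore orthogonal to all of $\ker F_*$, in particular to $\mathcal{D}_2^R$; since $\phi_R\mathcal{D}_2^R\subset\mathcal{D}_2^R$, the part $\phi_R X$ already lies in $\mathcal{D}_2^R$. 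Hence $\phi_R X$ is precisely the orthogonal projection of $RX$ onto $\mathcal{D}_2^R$, and by the very definition of the semi-slant angle together with $|RX|=|X|$ (from (\ref{eq: Herm1})) I obtain
\begin{equation*}
\cos\theta_R=\frac{|\phi_R X|}{|X|}.
\end{equation*}

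Next I would establish the two algebraic facts that make $-\phi_R^2$ well behaved on $\mathcal{D}_2^R$. Using $R^2=-id$ and (\ref{eq: Herm1}) one checks that $R$ is skew-symmetric, $g_M(RX,Y)=-g_M(X,RY)$; feeding this through the decomposition (\ref{eq: proj2}) and discarding the terms that pair a vertical vector with a horizontal one shows that $\phi_R$ is skew-symmetric on $\ker F_*$. Consequently $-\phi_R^2$ is self-adjoint on $\mathcal{D}_2^R$ (recall $\phi_R\mathcal{D}_2^R\subset\mathcal{D}_2^R$), and for every unit $X\in\Gamma(\mathcal{D}_2^R)$ the skew-symmetry gives $g_M(-\phi_R^2 X,X)=g_M(\phi_R X,\phi_R X)=|\phi_R X|^2=\cos^2\theta_R$.

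The final step is to upgrade this scalar identity to the operator identity. Since the semi-slant angle $\theta_R$ is constant, the self-adjoint operator $-\phi_R^2$ has Rayleigh quotient identically equal to $\cos^2\theta_R$ on $\mathcal{D}_2^R$; diagonalizing $-\phi_R^2$ in an orthonormal eigenbasis then forces every eigenvalue to equal $\cos^2\theta_R$, so that $-\phi_R^2=\cos^2\theta_R\,id$ on $\mathcal{D}_2^R$, which is (\ref{eq: slant11}). I expect the only genuinely delicate point to be the first step, namely justifying that the orthogonal projection of $RX$ onto $\mathcal{D}_2^R$ is exactly $\phi_R X$, so that the defining angle of the map coincides with the angle between $RX$ and $\phi_R X$; once the inclusion $\phi_R\mathcal{D}_2^R\subset\mathcal{D}_2^R$ and the orthogonality of $\omega_R X$ to $\ker F_*$ are in hand, the remainder is the standard symmetric-operator argument and uses only the constancy of $\theta_R$.
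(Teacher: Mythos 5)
Your proof is correct and follows essentially the same route as the argument the paper relies on (the paper gives no proof of its own, deferring to Proposition 2.6 of \cite{P3}): identify $\phi_R X$ as the orthogonal projection of $RX$ onto $\mathcal{D}_2^R$, deduce $g_M(-\phi_R^2 X, X)=\cos^2\theta_R\, g_M(X,X)$ from the skew-symmetry of $R$, and use the constancy of $\theta_R$ to pass to the operator identity. Your explicit self-adjointness and Rayleigh-quotient step at the end is just the rigorous completion of that standard slant-angle computation.
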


\begin{remark}
In particular, it is easy to obtain that the converse of Proposition
\ref{prop:slant} is also true.
\end{remark}

Since
\begin{eqnarray*}
g_M(\phi_R X, \phi_R Y) & = & \cos^2 \theta_R \ g_M(X, Y)  \\
g_M(\omega_R X, \omega_R Y) & = & \sin^2 \theta_R \ g_M(X, Y)
\end{eqnarray*}
for $ X,Y\in \Gamma(\mathcal{D}_2^R)$, if $\displaystyle{\theta_R\in (0, \frac{\pi}{2})}$, then we can locally choose an orthonormal frame
  $\{ f_1, \sec \theta_R \phi_R f_1, \cdots, f_s, \sec \theta_R \phi_R f_s \}$ of $\mathcal{D}_2^R$.

Using (\ref{eq: slant11}), in a similar way with Lemma 3.5 in \cite{P6}, we can obtain

\begin{lemma}\label{lem: angle}
Let $F$ be an almost h-semi-slant Riemannian map from a
hyperk\"{a}hler manifold $(M,I,J,K,g_M)$ to a Riemannian
manifold $(N, g_N)$ such that $(I,J,K)$ is an almost h-semi-slant
basis with the almost h-semi-slant angles $\{ \theta_I,\theta_J,\theta_K \}$.
If the tensor $\omega_R$ is parallel, then we have
\begin{equation}\label{eq: angle4}
\mathcal{T}_{\phi_R X} \phi_R X = -\cos^2 \theta_R \cdot \mathcal{T}_X X \quad \text{for} \ X\in \Gamma(\mathcal{D}_2^R),
\end{equation}
where $R \in \{ I,J,K \}$.
\end{lemma}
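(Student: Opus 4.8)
The plan is to use the hypothesis that $\omega_R$ is parallel to collapse the second of the two identities in part (1) of the lemma above into a single relation between $\mathcal{T}$ and $C_R$, and then to iterate that relation twice, invoking the slant identity $\phi_R^2 = -\cos^2\theta_R\,\mathrm{id}$ of Proposition \ref{prop:slant}. This mirrors the argument for Lemma 3.5 in \cite{P6}.

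First I would record two standing facts. Since $F$ is a Riemannian map, $\ker F_*$ is integrable, so for $U,V\in\Gamma(\ker F_*)$ one has $\mathcal{T}_U V=\mathcal{H}\nabla_U V$ and hence $\mathcal{T}_U V-\mathcal{T}_V U=\mathcal{H}[U,V]=0$; thus $\mathcal{T}$ is symmetric on vertical fields. Second, by the definition (\ref{eq: horizc}), the parallelism $(\nabla_X\omega_R)Y=0$ means exactly
$$\mathcal{H}\nabla_X\omega_R Y=\omega_R\widehat{\nabla}_X Y \qquad (X,Y\in\Gamma(\ker F_*)).$$

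Substituting this into the second identity of part (1) of the preceding lemma,
$$\mathcal{T}_X\phi_R Y+\mathcal{H}\nabla_X\omega_R Y=\omega_R\widehat{\nabla}_X Y+C_R\mathcal{T}_X Y,$$
the two $\omega_R\widehat{\nabla}_X Y$ terms cancel and I obtain the key relation
$$\mathcal{T}_X\phi_R Y=C_R\,\mathcal{T}_X Y \qquad (X,Y\in\Gamma(\ker F_*)). \quad (\ast)$$
Now fix $X\in\Gamma(\mathcal{D}_2^R)$, so that $\phi_R X\in\Gamma(\mathcal{D}_2^R)\subset\Gamma(\ker F_*)$. Applying $(\ast)$ with first slot $\phi_R X$ and $Y=X$, then the symmetry of $\mathcal{T}$, then $(\ast)$ again, gives
$$\mathcal{T}_{\phi_R X}\phi_R X=C_R\,\mathcal{T}_{\phi_R X}X=C_R\,\mathcal{T}_X\phi_R X=C_R^2\,\mathcal{T}_X X.$$
On the other hand, iterating $(\ast)$ with $Y=X$ and then $Y=\phi_R X$, and using that $\mathcal{T}$ is tensorial together with $\phi_R^2 X=-\cos^2\theta_R X$, yields
$$C_R^2\,\mathcal{T}_X X=C_R\,\mathcal{T}_X\phi_R X=\mathcal{T}_X\phi_R^2 X=-\cos^2\theta_R\,\mathcal{T}_X X.$$
Combining the two displays gives $\mathcal{T}_{\phi_R X}\phi_R X=-\cos^2\theta_R\,\mathcal{T}_X X$, which is (\ref{eq: angle4}).

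The routine parts are the two algebraic chains above; the step carrying the real content is the derivation of $(\ast)$, i.e. recognizing that parallelism of $\omega_R$ is precisely what makes $\mathcal{H}\nabla_X\omega_R Y$ agree with $\omega_R\widehat{\nabla}_X Y$ and thereby reduces the mixed identity to $\mathcal{T}_X\phi_R Y=C_R\mathcal{T}_X Y$. The only points requiring care are the legitimacy of substituting $\phi_R X$ (rather than a general vertical field) into $(\ast)$ and of pulling the scalar $-\cos^2\theta_R$ through $\mathcal{T}$; both are justified because $\phi_R$ preserves $\mathcal{D}_2^R$ and $\mathcal{T}$ is a tensor with $\theta_R$ constant on $U$.
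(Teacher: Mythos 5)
Your proof is correct and takes essentially the same route as the paper, whose proof is given only by reference to Lemma 3.5 of \cite{P6}: parallelism of $\omega_R$ collapses the mixed identity of the preceding lemma to $\mathcal{T}_X\phi_R Y = C_R\mathcal{T}_X Y$, which combined with the symmetry of $\mathcal{T}$ on vertical fields and $\phi_R^2 = -\cos^2\theta_R\,id$ on $\mathcal{D}_2^R$ yields (\ref{eq: angle4}). Your detour through $C_R^2$ is slightly longer than the standard substitution $Y=\phi_R X$ in the symmetrized relation $\mathcal{T}_X\phi_R Y=\mathcal{T}_Y\phi_R X$, but it is the same argument in substance.
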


Given an almost h-semi-slant Riemannian map $F$ from an almost quaternionic Hermitian
manifold $(M,E,g_M)$ to a Riemannian manifold $(N,g_N)$,  for some $R \in \{ I,J,K \}$ with
 $\displaystyle{\theta_R\in [0, \frac{\pi}{2})}$, we can define an endomorphism
$\widehat{R}$ of $\ker F_*$ by
$$
\widehat{R} := RP_R+\sec \theta_R \phi_R Q_R.
$$
Then
\begin{equation} \label{eq: compst}
{\widehat{R}}^2 = -id \quad \text{on} \ \ker F_*.
\end{equation}
Note that the distribution $\ker F_*$ is integrable. But its dimension may be odd. With the endomorphism $\widehat{R}$ we get

\begin{theorem}
Let $F$ be an almost h-semi-slant Riemannian map from an almost
quaternionic Hermitian manifold $(M,E,g_M)$ to a Riemannian manifold $(N,g_N)$ with
the almost h-semi-slant angles $\{ \theta_I,\theta_J,\theta_K \}$ not all $\displaystyle{\frac{\pi}{2}}$.
Then the fibers $F^{-1}(x)$ are even dimensional submanifolds of $M$ for $x\in M$.
\end{theorem}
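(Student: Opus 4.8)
The plan is to exhibit an almost complex structure on the vertical distribution $\ker F_*$ and then invoke the elementary fact that a real vector space carrying such a structure must be even-dimensional. Since the tangent space of each fiber $F^{-1}(x)$ at a point $p$ is exactly $\ker(F_*)_p$, it suffices to produce, at every point, a linear endomorphism of $\ker F_*$ whose square is $-id$.

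First I would use the hypothesis that the angles $\{ \theta_I,\theta_J,\theta_K \}$ are not all equal to $\frac{\pi}{2}$ to select some $R\in \{ I,J,K \}$ with $\theta_R\in [0,\frac{\pi}{2})$. This is precisely the condition guaranteeing that $\sec\theta_R$ is finite, so that the endomorphism
$$\widehat{R} = RP_R+\sec\theta_R\,\phi_R Q_R$$
is well-defined on $\ker F_*$. I would then recall (\ref{eq: compst}), which states that $\widehat{R}^2=-id$ on $\ker F_*$; this identifies $\widehat{R}$ as an almost complex structure on the vertical distribution.

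Finally, I would apply the standard linear-algebra observation: if a real vector space $V$ admits a linear map $\widehat{R}$ with $\widehat{R}^2=-id$, then $\det(\widehat{R})^2=\det(\widehat{R}^2)=(-1)^{\dim V}\geq 0$ forces $\dim V$ to be even (equivalently, $\widehat{R}$ endows $V$ with the structure of a complex vector space). Applying this fiberwise with $V=\ker(F_*)_p$ shows that $\ker F_*$ has even rank at each point, and since $\ker(F_*)_p$ is the tangent space to the fiber $F^{-1}(F(p))$, each fiber is an even-dimensional submanifold of $M$.

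The only point demanding care, and the reason for the ``not all $\frac{\pi}{2}$'' hypothesis, is the existence of an admissible index $R$ rendering $\sec\theta_R$ finite; were all three angles equal to $\frac{\pi}{2}$, the factor $\phi_R$ would vanish on $\mathcal{D}_2^R$ and the construction of $\widehat{R}$ would collapse. Once such an $R$ is fixed, the identity $\widehat{R}^2=-id$ supplied by (\ref{eq: compst}) does all the work, so I do not expect any substantial obstacle beyond this initial selection.
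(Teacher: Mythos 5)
Your proposal is correct and takes essentially the same route as the paper: the paper introduces the endomorphism $\widehat{R} = RP_R + \sec\theta_R\,\phi_R Q_R$ and the identity (\ref{eq: compst}) immediately before the theorem precisely so that the hypothesis that the angles are not all $\frac{\pi}{2}$ supplies an index $R$ with $\theta_R\in [0,\frac{\pi}{2})$, making $\widehat{R}$ an almost complex structure on $\ker F_*$ and forcing the fibers to be even dimensional. Your determinant argument merely makes explicit the standard linear-algebra fact that the paper leaves implicit.
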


Now, we consider the harmonicity of such maps. Let $F$ be a $C^{\infty}$-map  from a Riemannian manifold $(M,g_M)$ to a Riemannian manifold
$(N,g_N)$. We can canonically define a function $e(F) : M\mapsto [0, \infty)$ given by
\begin{equation} \label{eq: energ1}
e(F)(x) := \frac{1}{2} |(F_*)_x|^2, \quad x\in M,
\end{equation}
where $|(F_*)_x|$ denotes the Hilbert-Schmidt norm of $(F_*)_x$ \cite{BW}. Then the function $e(F)$ is said to be the {\em energy density} of $F$.
Let $D$ be a compact domain of $M$, i.e., $D$ is the compact closure $\bar{U}$ of a non-empty connected open subset $U$ of $M$.
The {\em energy integral} of $F$ over $D$ is the integral of its energy density:
\begin{equation} \label{eq: energ2}
E(F;D) := \int_D e(F) v_{g_M} = \frac{1}{2} \int_D |F_*|^2 v_{g_M},
\end{equation}
where $v_{g_M}$ is the volume form on $(M,g_M)$.
Let $C^{\infty}(M,N)$ denote the space of all $C^{\infty}$-maps from $M$ to $N$. A $C^{\infty}$-map $F : M\mapsto N$ is said to be
{\em harmonic} if it is a critical point of the energy functional $E(\ ;D) : C^{\infty}(M,N)\mapsto \mathbb{R}$ for any compact domain $D\subset M$.
By the result of J. Eells and J. Sampson \cite{ES}, we see that the map $F$ is harmonic if and only if the tension field $\tau (F) := trace \nabla F_* = 0$.

\begin{theorem}
Let $F$ be an almost h-semi-slant Riemannian map from a
hyperk\"{a}hler manifold $(M,I,J,K,g_M)$ to a Riemannian
manifold $(N, g_N)$ such that $(I,J,K)$ is an almost h-semi-slant basis.
Assume that $\widetilde{H} = 0$, where $\widetilde{H}$ denotes the mean curvature vector field of $range F_*$.
Then each of the following conditions implies that $F$ is harmonic.

a) $\mathcal{D}_1^I$ is integrable and $trace (\nabla F_*) = 0$ on $\mathcal{D}_2^I$.

b) $\mathcal{D}_1^J$ is integrable and $trace (\nabla F_*) = 0$ on $\mathcal{D}_2^J$.

c) $\mathcal{D}_1^K$ is integrable and $trace (\nabla F_*) = 0$ on $\mathcal{D}_2^K$.
\end{theorem}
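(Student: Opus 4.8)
The plan is to compute the tension field $\tau(F) = \mathrm{trace}(\nabla F_*)$ from an adapted orthonormal frame and show it vanishes contribution by contribution. First I would split a local orthonormal frame of $TM$ into a vertical part tangent to $\ker F_*$ and a horizontal part spanning $(\ker F_*)^{\perp}$, so that $\tau(F)$ separates into a horizontal sum and a vertical sum. For a horizontal frame $\{X_\alpha\}$, Lemma \ref{lem: hori} places each $(\nabla F_*)(X_\alpha,X_\alpha)$ in $\Gamma((range F_*)^{\perp})$, and $\sum_\alpha (\nabla F_*)(X_\alpha,X_\alpha)$ is, up to the usual normalization, the mean curvature vector field $\widetilde{H}$ of $range F_*$; hence the hypothesis $\widetilde{H}=0$ annihilates the entire horizontal contribution. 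For a vertical vector $U$ one has $F_*U=0$, so $(\nabla F_*)(U,U) = -F_*(\nabla_U U) = -F_*(\mathcal{T}_U U)$, the vertical part of $\nabla_U U$ being killed by $F_*$. Everything thus reduces to showing $F_*(\sum_U \mathcal{T}_U U) = 0$, i.e.\ that $F_*$ of the unnormalized mean curvature of the fibers vanishes.

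Under condition (a) I would decompose $\ker F_* = \mathcal{D}_1^I \oplus \mathcal{D}_2^I$ and split $\sum_U \mathcal{T}_U U$ into a trace over $\mathcal{D}_1^I$ plus a trace over $\mathcal{D}_2^I$. The $\mathcal{D}_2^I$ part reproduces $\mathrm{trace}(\nabla F_*)$ on $\mathcal{D}_2^I$, which is zero by hypothesis, so it remains only to prove that the trace of $\mathcal{T}$ over the complex distribution $\mathcal{D}_1^I$ vanishes identically. Since $\mathcal{D}_1^I$ is $I$-invariant, I would evaluate this trace in an adapted frame $\{e_1, Ie_1, \ldots, e_k, Ie_k\}$.

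The heart of the argument, and the step I expect to be the main obstacle, is the pointwise identity $\mathcal{T}_{IX}\,IX = -\mathcal{T}_X X$ for $X \in \Gamma(\mathcal{D}_1^I)$, which I would derive from integrability of $\mathcal{D}_1^I$ together with the hyperk\"{a}hler condition $\nabla I = 0$. Applying $I$ to $[X,Y] = \nabla_X Y - \nabla_Y X$ and using parallelism gives $I[X,Y] = \nabla_X(IY) - \nabla_Y(IX)$ for $X,Y \in \Gamma(\mathcal{D}_1^I)$, whose horizontal component is $\mathcal{T}_X(IY) - \mathcal{T}_Y(IX)$ (note $IX,IY \in \mathcal{D}_1^I \subset \ker F_*$ by $I$-invariance, so $\mathcal{T}$ applies). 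Integrability forces $[X,Y] \in \Gamma(\mathcal{D}_1^I)$, hence $I[X,Y]$ is again vertical and its horizontal component vanishes, giving $\mathcal{T}_X(IY) = \mathcal{T}_Y(IX)$. Specializing to $Y = IX$ and using $I^2 = -\mathrm{id}$ then yields $-\mathcal{T}_X X = \mathcal{T}_{IX}\,IX$, so each pair $\mathcal{T}_{e_i}e_i + \mathcal{T}_{Ie_i}Ie_i$ cancels and the $\mathcal{D}_1^I$-trace is zero. The delicate bookkeeping to check is that all projections onto $\ker F_*$ and $(\ker F_*)^{\perp}$ are taken consistently and that the vectors to which $\mathcal{T}$ is applied are genuinely vertical.

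Collecting the three vanishing contributions gives $\tau(F)=0$, so $F$ is harmonic. Cases (b) and (c) follow verbatim with $I$ replaced by $J$ and $K$, since $\nabla J = \nabla K = 0$ on a hyperk\"{a}hler manifold.
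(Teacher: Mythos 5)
Your proposal is correct and follows essentially the same route as the paper: split the trace, kill the horizontal part with $\widetilde{H}=0$, the $\mathcal{D}_2^R$-part by hypothesis, and the $\mathcal{D}_1^R$-part by pairwise cancellation in an adapted frame $\{e_i, Re_i\}$ using $\nabla R=0$, torsion-freeness, and integrability. Your key identity $\mathcal{T}_{IX}IX=-\mathcal{T}_X X$ is precisely the horizontal component of the paper's computation $(\nabla F_*)(Re_i,Re_i)=-(\nabla F_*)(e_i,e_i)-F_*R[Re_i,e_i]$, so the two arguments coincide.
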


\begin{proof}
Using Lemma \ref{lem: hori}, we get $trace \nabla F_*|_{\ker F_*}\in \Gamma(range F_*)$ and $trace \nabla F_*|_{(\ker F_*)^{\perp}}$
$\in \Gamma((range F_*)^{\perp})$ so that from (\ref{eq: decom12}), we have
$$
trace (\nabla F_*) = 0 \quad \Leftrightarrow \quad trace \nabla F_*|_{\ker F_*} = 0 \ \text{and} \ trace \nabla F_*|_{(\ker F_*)^{\perp}} = 0.
$$
Moreover, we easily obtain
$$
trace \nabla F_*|_{(\ker F_*)^{\perp}} = l\widetilde{H} \quad \text{for} \ l := \dim (\ker F_*)^{\perp}
$$
so that
$$
trace \nabla F_*|_{(\ker F_*)^{\perp}} = 0 \quad \Leftrightarrow \quad \widetilde{H} = 0.
$$
Given $R \in \{ I,J,K \}$, since $\mathcal{D}_1^R = R(\mathcal{D}_1^R)$,
 we can choose locally an orthonormal frame $\{ e_1, Re_1,$ $\cdots, e_k, Re_k \}$ of $\mathcal{D}_1^R$ so that
\begin{eqnarray*}
(\nabla F_*)(Re_i,Re_i) & = & -F_* \nabla_{Re_i} Re_i = -F_* R(\nabla_{e_i} Re_i+[Re_i, e_i])    \\
                        & = & F_* \nabla_{e_i} e_i-F_* R[Re_i, e_i] = -(\nabla F_*)(e_i,e_i)-F_* R[Re_i, e_i]
\end{eqnarray*}
for $1\leq i\leq k$.

Thus,
$$
\mathcal{D}_1^R \ \text{is integrable} \quad \Rightarrow \quad  trace \nabla F_*|_{\mathcal{D}_1^R} = 0.
$$
Since $\mathcal{D}_2^R$ is the orthogonal complement of $\mathcal{D}_1^R$ in $\ker F_*$, we have the result.
\end{proof}

Using Lemma \ref {lem: angle}, we obtain

\begin{corollary}
Let $F$ be an almost h-semi-slant Riemannian map from a
hyperk\"{a}hler manifold $(M,I,J,K,g_M)$ to a Riemannian
manifold $(N, g_N)$ such that $(I,J,K)$ is an almost h-semi-slant basis with
the almost h-semi-slant angles $\{ \theta_I,\theta_J,$ $\theta_K \}$.
Assume that $\widetilde{H} = 0$.
Then each of the following conditions implies that $F$ is harmonic.

a) $\mathcal{D}_1^I$ is integrable, the tensor $\omega_I$ is parallel, and $\theta_I\in [0,\frac{\pi}{2})$.

b) $\mathcal{D}_1^J$ is integrable, the tensor $\omega_J$ is parallel, and $\theta_J\in [0,\frac{\pi}{2})$.

c) $\mathcal{D}_1^K$ is integrable, the tensor $\omega_K$ is parallel, and $\theta_K\in [0,\frac{\pi}{2})$.
\end{corollary}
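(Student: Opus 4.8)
The plan is to recognize this Corollary as a direct specialization of the preceding harmonicity Theorem. That Theorem already shows that, under the standing hypothesis $\widetilde{H} = 0$, the two conditions ``$\mathcal{D}_1^R$ is integrable'' and ``$trace(\nabla F_*) = 0$ on $\mathcal{D}_2^R$'' together force $F$ to be harmonic. Since the integrability of $\mathcal{D}_1^R$ is assumed in each part of the present statement as well, the whole task reduces to a single implication: \emph{if $\omega_R$ is parallel and $\theta_R \in [0,\frac{\pi}{2})$, then $trace(\nabla F_*)$ vanishes on $\mathcal{D}_2^R$.} Establishing this for $R = I$ (resp.\ $J$, $K$) yields part a) (resp.\ b), c)).

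To prove the reduced implication, I would first record that for vertical fields $X, Y \in \Gamma(\ker F_*)$ one has $(\nabla F_*)(X,Y) = -F_*(\nabla_X Y) = -F_*(\mathcal{T}_X Y)$, since $F_* Y = 0$ and $F_*$ annihilates the vertical component $\widehat{\nabla}_X Y$ of $\nabla_X Y$. Next, because $\theta_R < \frac{\pi}{2}$, I would use the orthonormal frame $\{ f_1, \sec\theta_R\, \phi_R f_1, \dots, f_s, \sec\theta_R\, \phi_R f_s \}$ of $\mathcal{D}_2^R$ exhibited just before Lemma~\ref{lem: angle}, whose orthonormality follows from $g_M(\phi_R X, \phi_R Y) = \cos^2\theta_R\, g_M(X,Y)$ together with the skew-symmetry of $R$ (which gives $g_M(f_i, \phi_R f_i) = g_M(f_i, R f_i) = 0$). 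Tracing $\nabla F_*$ over this frame then groups into the pairs
\[
(\nabla F_*)(f_i, f_i) + \sec^2\theta_R\, (\nabla F_*)(\phi_R f_i, \phi_R f_i) = -F_*\big( \mathcal{T}_{f_i} f_i + \sec^2\theta_R\, \mathcal{T}_{\phi_R f_i} \phi_R f_i \big).
\]

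The decisive step is to feed Lemma~\ref{lem: angle} into each pair. Because $\omega_R$ is parallel, that Lemma gives $\mathcal{T}_{\phi_R f_i}\phi_R f_i = -\cos^2\theta_R\, \mathcal{T}_{f_i} f_i$, so the bracket collapses to $\mathcal{T}_{f_i} f_i - \mathcal{T}_{f_i} f_i = 0$. Hence every pair contributes zero, $trace(\nabla F_*)|_{\mathcal{D}_2^R} = 0$, and this is precisely the missing hypothesis needed to invoke the preceding Theorem and conclude that $F$ is harmonic.

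I do not expect a genuine obstacle here, as the argument is a short combination of a frame computation with Lemma~\ref{lem: angle}. The only points requiring care are verifying that the chosen frame is truly orthonormal (so that the trace carries the correct $\sec^2\theta_R$ weights) and checking the exact cancellation of $\sec^2\theta_R$ against $\cos^2\theta_R$; a minor subtlety is the boundary value $\theta_R = 0$, where the frame $\{f_i, \phi_R f_i\}$ must be read with $\sec\theta_R = 1$, but Lemma~\ref{lem: angle} and the resulting cancellation remain valid verbatim.
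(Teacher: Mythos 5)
Your proposal is correct and takes essentially the same route as the paper: the paper obtains this corollary precisely by feeding Lemma~\ref{lem: angle} into the preceding harmonicity theorem, which is exactly your reduction. Your frame computation with $\{ f_i, \sec \theta_R\, \phi_R f_i \}$, and the cancellation of $\sec^2 \theta_R$ against the $-\cos^2 \theta_R$ from Lemma~\ref{lem: angle}, is precisely the (omitted) verification that $trace\, (\nabla F_*) = 0$ on $\mathcal{D}_2^R$, including the correct handling of the boundary case $\theta_R = 0$.
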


We now investigate the condition for such a map $F$ to be totally geodesic.

\begin{theorem}
Let $F$ be an almost h-semi-slant Riemannian map from a
hyperk\"{a}hler manifold $(M,I,J,K,g_M)$ to a Riemannian
manifold $(N, g_N)$ such that $(I,J,K)$ is an almost h-semi-slant
basis. Assume that $\bar{Q}(\nabla_{Z_1}^F F_* Z_2) = 0$ for $Z_1,Z_2\in \Gamma((\ker F_*)^{\perp})$.
Then the following conditions are equivalent:

a) $F$ is a totally geodesic map.

b)
\begin{align*}
  &\omega_I(\widehat{\nabla}_X \phi_I Y+\mathcal{T}_X \omega_I Y)+C_I(\mathcal{T}_X \phi_I Y
    +\mathcal{H}\nabla_X \omega_I Y) = 0   \\
  &\omega_I(\widehat{\nabla}_X B_IZ+\mathcal{T}_X C_IZ)+C_I(\mathcal{T}_X B_IZ
    +\mathcal{H}\nabla_X C_IZ) = 0
\end{align*}
for $X,Y\in \Gamma(\ker F_*)$ and $Z\in \Gamma((\ker
F_*)^{\perp})$.

c)
\begin{align*}
  &\omega_J(\widehat{\nabla}_X \phi_J Y+\mathcal{T}_X \omega_J Y)+C_J(\mathcal{T}_X \phi_J Y
    +\mathcal{H}\nabla_X \omega_J Y) = 0   \\
  &\omega_J(\widehat{\nabla}_X B_JZ+\mathcal{T}_X C_JZ)+C_J(\mathcal{T}_X B_JZ
    +\mathcal{H}\nabla_X C_JZ) = 0
\end{align*}
for $X,Y\in \Gamma(\ker F_*)$ and $Z\in \Gamma((\ker
F_*)^{\perp})$.

d)
\begin{align*}
  &\omega_K(\widehat{\nabla}_X \phi_K Y+\mathcal{T}_X \omega_K Y)+C_K(\mathcal{T}_X \phi_K Y
    +\mathcal{H}\nabla_X \omega_K Y) = 0   \\
  &\omega_K(\widehat{\nabla}_X B_KZ+\mathcal{T}_X C_KZ)+C_K(\mathcal{T}_X B_KZ
    +\mathcal{H}\nabla_X C_KZ) = 0
\end{align*}
for $X,Y\in \Gamma(\ker F_*)$ and $Z\in \Gamma((\ker
F_*)^{\perp})$.
\end{theorem}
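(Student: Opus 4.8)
The plan is to use the definition that $F$ is totally geodesic exactly when $(\nabla F_*)(U,V)=0$ for all $U,V\in\Gamma(TM)$, and to reduce this to the three displayed blocks by splitting $TM=\ker F_*\oplus(\ker F_*)^{\perp}$ as in (\ref{eq: decom12}). Since $(\nabla F_*)$ is symmetric and tensorial, it suffices to test it on three pair-types: both arguments in $\ker F_*$, one in each factor, and both in $(\ker F_*)^{\perp}$. I would fix an arbitrary $R\in\{I,J,K\}$ throughout. Because condition a) is manifestly independent of the choice of $R$, while b), c), d) are precisely the conditions produced by $R=I,J,K$, it is enough to prove a) $\Leftrightarrow$ (condition for one generic $R$); running this for the three structures then yields all of a) $\Leftrightarrow$ b) $\Leftrightarrow$ c) $\Leftrightarrow$ d) at once.

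For the horizontal--horizontal block, Lemma \ref{lem: hori} already gives $(\nabla F_*)(Z_1,Z_2)\in\Gamma((range F_*)^{\perp})$ for $Z_1,Z_2\in\Gamma((\ker F_*)^{\perp})$; since $F_*(\nabla_{Z_1}Z_2)\in\Gamma(range F_*)$, projecting shows $(\nabla F_*)(Z_1,Z_2)=\bar{Q}(\nabla^F_{Z_1}F_*Z_2)$, which vanishes by the standing hypothesis. Hence this block contributes no condition. For the vertical--vertical block, take $X,Y\in\Gamma(\ker F_*)$; from $F_*Y=0$ we get $(\nabla F_*)(X,Y)=-F_*(\mathcal{H}\nabla_XY)$. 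Now I would exploit that $M$ is hyperk\"ahler, so $R$ is parallel and $R\nabla_XY=\nabla_X(RY)$. Writing $RY=\phi_RY+\omega_RY$ by (\ref{eq: proj2}) and reading off the vertical and horizontal parts of $R\nabla_XY$ (these are exactly the combinations appearing in the preparatory Lemma, part (1)), and then applying $R$ once more using $R^2=-\mathrm{id}$ together with (\ref{eq: proj2})--(\ref{eq: proj3}), isolates $\mathcal{H}\nabla_XY=-\bigl[\omega_R(\widehat{\nabla}_X\phi_RY+\mathcal{T}_X\omega_RY)+C_R(\mathcal{T}_X\phi_RY+\mathcal{H}\nabla_X\omega_RY)\bigr]$. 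Since $F_*$ is injective on $(\ker F_*)^{\perp}$, vanishing of this block is precisely the first displayed equation.

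The vertical--horizontal block is where I expect the real difficulty. For $X\in\Gamma(\ker F_*)$ and $Z\in\Gamma((\ker F_*)^{\perp})$, the same device applied to $RZ=B_RZ+C_RZ$ (using Lemma, part (3)) yields $\mathcal{H}\nabla_XZ=-\bigl[\omega_R(\widehat{\nabla}_XB_RZ+\mathcal{T}_XC_RZ)+C_R(\mathcal{T}_XB_RZ+\mathcal{H}\nabla_XC_RZ)\bigr]$, whose vanishing is the second displayed equation. The subtle point, and the main obstacle, is the pullback term in $(\nabla F_*)(X,Z)=\nabla^F_XF_*Z-F_*(\nabla_XZ)$: unlike $F_*Y$ in the previous case, the section $F_*Z$ is \emph{not} zero, so this term does not drop out. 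The clean way to control it is symmetry of the second fundamental form: evaluating instead $(\nabla F_*)(Z,X)=\nabla^F_ZF_*X-F_*(\nabla_ZX)=-F_*(\mathcal{H}\nabla_ZX)$ shows the mixed second fundamental form lies entirely in $range F_*$, and comparing the two expressions produces the identity $\nabla^F_XF_*Z=F_*(\mathcal{H}[X,Z])$. This is exactly the step requiring care, since it records the asymmetry between $\mathcal{H}\nabla_XZ$ and $\mathcal{H}\nabla_ZX$; once it is in hand, $F_*$-injectivity on the horizontal space reduces $(\nabla F_*)(X,Z)=0$ to the stated second equation.

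Finally I would assemble the three blocks: under the hypothesis $\bar{Q}(\nabla^F_{Z_1}F_*Z_2)=0$ the horizontal block is automatic, so $F$ is totally geodesic if and only if both the vertical--vertical and the vertical--horizontal equations hold for the chosen $R$, which is exactly the condition indexed by that $R$. Specializing $R$ to $I$, $J$, $K$ gives b), c), d) respectively, and hence the claimed chain of equivalences.
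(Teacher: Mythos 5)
Your handling of the horizontal--horizontal and vertical--vertical blocks coincides with the paper's proof and is correct: the standing hypothesis $\bar{Q}(\nabla^F_{Z_1}F_*Z_2)=0$ disposes of the first block, and expanding $-F_*(\nabla_XY)=F_*(R\nabla_X(RY))$ through $\phi_R,\omega_R,B_R,C_R$ gives the first displayed equation. The genuine gap is the last step of the mixed block, exactly at the point you yourself flagged as ``the step requiring care.'' Your symmetry argument correctly yields
\begin{equation*}
(\nabla F_*)(X,Z)=(\nabla F_*)(Z,X)=-F_*(\mathcal{H}\nabla_Z X)=-F_*(\mathcal{A}_Z X),
\end{equation*}
so by injectivity of $F_*$ on $(\ker F_*)^{\perp}$ the mixed part of $\nabla F_*$ vanishes if and only if $\mathcal{A}_Z X=\mathcal{H}\nabla_Z X=0$. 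But by your own computation the stated second equation is equivalent to $\mathcal{H}\nabla_X Z=0$, and these two conditions are not the same: they differ by $\mathcal{H}[X,Z]$, which is precisely what your identity $\nabla^F_X F_*Z=F_*(\mathcal{H}[X,Z])$ measures, and which is generically nonzero. Concretely, $\mathcal{A}_{fZ}X=f\,\mathcal{A}_Z X$ is tensorial in $Z$, whereas $\mathcal{H}\nabla_X(fZ)=f\,\mathcal{H}\nabla_X Z+(Xf)Z$ is not; for the projection $\mathbb{R}^8=\mathbb{R}^4\times\mathbb{R}^4\to\mathbb{R}^4$, which is a totally geodesic Riemannian map, one has $\mathcal{A}\equiv 0$ while $\mathcal{H}\nabla_X(fZ)=(Xf)Z\neq 0$ whenever $Xf\neq 0$. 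So your closing claim that ``$F_*$-injectivity on the horizontal space reduces $(\nabla F_*)(X,Z)=0$ to the stated second equation'' is a non sequitur: nothing in your argument controls the bracket term, and in fact no argument can identify the two conditions without an extra ingredient, since they are inequivalent for general vector fields.

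For comparison, the paper's own proof closes this block by writing $(\nabla F_*)(X,Z)=-F_*(\nabla_X Z)$, i.e.\ by tacitly setting $\nabla^F_X F_*Z=0$ for vertical $X$ --- exactly the term you proved equals $F_*(\mathcal{H}[X,Z])$ --- and with that identification the computation does land on the stated second equation. Your more careful accounting therefore terminates at the tensorial condition $\mathcal{A}_Z X=0$ (together with the first equation), not at condition b) as literally stated. The way to bridge the two is to restrict $Z$ to basic (projectable) horizontal fields, for which $[X,Z]$ is vertical when $X$ is vertical, so that $\mathcal{H}\nabla_X Z=\mathcal{A}_Z X$ and the stated second equation coincides with $\mathcal{A}_Z X=0$; tensoriality of $\mathcal{A}_Z X$ then recovers the condition pointwise. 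As written, your proof has a real gap relative to the theorem as stated, although the step you identified as the obstacle is glossed over in the paper's own proof as well.
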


\begin{proof}
If $Z_1,Z_2\in \Gamma((\ker F_*)^{\perp})$, then by Lemma \ref {lem: hori}, we get
$$
(\nabla F_*)(Z_1,Z_2)=0 \quad \Leftrightarrow \quad \bar{Q}((\nabla F_*)(Z_1,Z_2)) = \bar{Q}(\nabla_{Z_1}^F F_* Z_2) = 0.
$$
For $X,Y\in \Gamma(\ker F_*)$, we obtain
\begin{align*}
(\nabla F_*)(X,Y)&= -F_* (\nabla_X Y) = F_* (I\nabla_X (\phi_I Y+\omega_I Y))   \\
          &= F_* (\phi_I \widehat{\nabla}_X \phi_I Y+\omega_I\widehat{\nabla}_X \phi_I Y+B_I\mathcal{T}_X \phi_I Y
             +C_I\mathcal{T}_X \phi_I Y+\phi_I \mathcal{T}_X \omega_I Y   \\
          & \ \ \ +\omega_I \mathcal{T}_X \omega_I Y +B_I\mathcal{H}\nabla_X \omega_I Y
          +C_I\mathcal{H}\nabla_X \omega_I Y).
\end{align*}
Thus,
$$
(\nabla F_*)(X,Y) = 0 \Leftrightarrow \omega_I(\widehat{\nabla}_X
\phi_I Y+\mathcal{T}_X \omega_I Y)+C_I(\mathcal{T}_X \phi_I Y
    +\mathcal{H}\nabla_X \omega_I Y) = 0.
$$
Given $X\in \Gamma(\ker F_*)$ and $Z\in \Gamma((\ker F_*)^{\perp})$,
since $(\nabla F_*)(X,Z)=(\nabla F_*)(Z,X)$, it is sufficient to
consider the following case:
\begin{align*}
(\nabla F_*)(X,Z)&= -F_* (\nabla_X Z) = F_* (I\nabla_X (B_IZ+C_IZ))   \\
          &= F_* (\phi_I \widehat{\nabla}_X B_IZ+\omega_I\widehat{\nabla}_X B_IZ+B_I\mathcal{T}_X B_IZ
             +C_I\mathcal{T}_X B_IZ+\phi_I \mathcal{T}_X C_IZ   \\
          & \ \ \ +\omega_I \mathcal{T}_X C_IZ +B_I\mathcal{H}\nabla_X C_IZ+C_I\mathcal{H}\nabla_X C_IZ)
\end{align*}
so that
$$
(\nabla F_*)(X,Z) = 0 \Leftrightarrow \omega_I(\widehat{\nabla}_X
B_IZ+\mathcal{T}_X C_IZ)+C_I(\mathcal{T}_X B_IZ
    +\mathcal{H}\nabla_X C_IZ) = 0.
$$
Hence,
$$
a) \Leftrightarrow b).
$$
Similarly,
$$
\quad a) \Leftrightarrow c) \quad \text{and} \quad a)
\Leftrightarrow d).
$$
Therefore, we get the result.
\end{proof}

Let $F : (M,g_M)\mapsto (N,g_N)$ be a Riemannian map. The map
$F$ is called a Riemannian map {\em with totally umbilical
fibers} if
\begin{equation}\label{eq: umbil}
\mathcal{T}_X Y = g_M (X, Y)H \quad \text{for} \ X,Y\in \Gamma(\ker F_*),
\end{equation}
where $H$ is the mean curvature vector field of the fiber.

In a similar way with Lemma 2.17 in \cite {P3}, we have

\begin{lemma}\label{lem: umb}
Let $F$ be an almost h-semi-slant Riemannian map with totally
umbilical fibers from a hyperk\"{a}hler manifold $(M,I,J,K,g_M)$
 to a Riemannian manifold $(N, g_N)$ such that $(I,J,K)$ is an
almost h-semi-slant basis. Then we get
\begin{equation}\label{eq: umbil2}
H\in \Gamma(\omega_R \mathcal{D}_2^R) \quad \text{for} \ R\in \{ I,J,K \}.
\end{equation}
\end{lemma}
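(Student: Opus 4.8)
The plan is to show that the mean curvature vector field $H$, which is a section of $(\ker F_*)^{\perp}$, has no component along $\mu_R$. In view of the orthogonal decomposition $(\ker F_*)^{\perp}=\omega_R\mathcal{D}_2^R\oplus\mu_R$ in \eqref{eq: proj6}, this is precisely the assertion $H\in\Gamma(\omega_R\mathcal{D}_2^R)$. So I would fix $R\in\{I,J,K\}$ and reduce the whole statement to proving that $g_M(H,Z)=0$ for every $Z\in\Gamma(\mu_R)$.

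The main computational input is the second structure identity established above, namely $\mathcal{T}_X\phi_R Y+\mathcal{H}\nabla_X\omega_R Y=\omega_R\widehat{\nabla}_X Y+C_R\mathcal{T}_X Y$ for $X,Y\in\Gamma(\ker F_*)$. Feeding in the totally umbilical condition \eqref{eq: umbil} turns this into
\[
g_M(X,\phi_R Y)\,H+\mathcal{H}\nabla_X\omega_R Y=\omega_R\widehat{\nabla}_X Y+g_M(X,Y)\,C_R H .
\]
I would then pair this with $Z\in\Gamma(\mu_R)$ and use two facts: first, $\omega_R\widehat{\nabla}_X Y\in\Gamma(\omega_R\mathcal{D}_2^R)$ is orthogonal to $\mu_R$, so it drops out; second, since $Z$ is horizontal while $B_R H$ is vertical and $R$ is skew-symmetric with $R(\mu_R)=\mu_R$, one gets $g_M(C_R H,Z)=g_M(RH,Z)=-g_M(H,RZ)$. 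This produces the key relation
\[
g_M(X,\phi_R Y)\,g_M(H,Z)+g_M(\mathcal{H}\nabla_X\omega_R Y,Z)=-g_M(X,Y)\,g_M(H,RZ).
\]

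The decisive step is a clever choice of test vectors, exploiting $R(\mathcal{D}_1^R)=\mathcal{D}_1^R$. Taking $Y\in\Gamma(\mathcal{D}_1^R)$ forces $\omega_R Y=0$, so the troublesome term $\mathcal{H}\nabla_X\omega_R Y$ vanishes; choosing $X=Y$ a unit section and using $g_M(Y,\phi_R Y)=g_M(Y,RY)=0$ collapses the relation to $g_M(H,RZ)=0$. Since $R$ is an isomorphism of $\mu_R$, letting $Z$ range over $\mu_R$ gives $g_M(H,W)=0$ for all $W\in\Gamma(\mu_R)$, which is the claim. The hyperk\"ahler hypothesis enters only through $\nabla R=0$, which is exactly what makes the structure identities above tensorial and lets $R$ pass through $\nabla$ throughout.

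The hard part is the case in which $\mathcal{D}_1^R$ degenerates to the zero distribution, i.e. $\ker F_*=\mathcal{D}_2^R$ is purely slant: then no test vector annihilates $\omega_R Y$ and the term $g_M(\mathcal{H}\nabla_X\omega_R Y,Z)$ genuinely survives. To handle it I would antisymmetrize the key relation in $X$ and $Y$; using that $\phi_R$ is skew-symmetric on $\ker F_*$ (a consequence of the skew-symmetry of $R$) and that $\omega_R\,\mathcal{V}[X,Y]\in\Gamma(\omega_R\mathcal{D}_2^R)$ is orthogonal to $\mu_R$, the surviving obstruction reduces to the $d^{\nabla}$-type term $g_M\big((\nabla_X\omega_R)Y-(\nabla_Y\omega_R)X,\,Z\big)$ with $(\nabla_X\omega_R)Y$ as defined in \eqref{eq: horizc}. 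I expect this to be the real obstacle, and I would dispose of it exactly as in \cite{P3}: work in the slant-adapted orthonormal frame $\{f_i,\sec\theta_R\,\phi_R f_i\}$ of $\mathcal{D}_2^R$ and invoke the slant relation $\phi_R^2 X=-\cos^2\theta_R\,X$ from \eqref{eq: slant11}, so that the contributions of these derivative terms cancel in pairs and only $g_M(H,Z)=0$ is left.
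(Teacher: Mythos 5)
Your core computation is correct and is, in essence, the proof the paper itself appeals to (the paper defers to Lemma 2.17 of \cite{P3}, which is proved by exactly this kind of test-vector argument): reduce via \eqref{eq: proj6} to showing $g_M(H,Z)=0$ for all $Z\in\Gamma(\mu_R)$, substitute the umbilical condition \eqref{eq: umbil} into the identity $\mathcal{T}_X\phi_R Y+\mathcal{H}\nabla_X\omega_R Y=\omega_R\widehat{\nabla}_X Y+C_R\mathcal{T}_X Y$, choose $Y\in\Gamma(\mathcal{D}_1^R)$ so that $\omega_R Y=0$ kills the derivative term, take $X=Y$ of unit length, use $g_M(Y,\phi_R Y)=0$, and finish with $R(\mu_R)=\mu_R$. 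Up to that point your proposal is sound, and it is the entire proof --- \emph{provided} $\mathcal{D}_1^R\neq\{0\}$.

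The genuine gap is your final paragraph: the degenerate case $\ker F_*=\mathcal{D}_2^R$ cannot be rescued by antisymmetrizing in a slant-adapted frame, because in that case your key relation carries no information at all. Indeed, for any vertical $X,Y$ and $Z\in\Gamma(\mu_R)$, using $g_M(\omega_R Y,Z)=0$, $\nabla R=0$, the skew-symmetry of $\mathcal{T}_X$, and \eqref{eq: umbil}, one computes
\begin{align*}
g_M(\mathcal{H}\nabla_X\omega_R Y,Z)
&=-g_M(\omega_R Y,\nabla_X Z)
=-g_M(RY-\phi_R Y,\nabla_X Z)\\
&=g_M(Y,\mathcal{T}_X RZ)+g_M(\phi_R Y,\mathcal{T}_X Z)\\
&=-g_M(X,Y)\,g_M(H,RZ)-g_M(X,\phi_R Y)\,g_M(H,Z),
\end{align*}
so your key relation collapses to $-g_M(X,Y)g_M(H,RZ)=-g_M(X,Y)g_M(H,RZ)$: an identity valid for \emph{every} horizontal field $H$, from which no frame summation or cancellation ``in pairs'' can ever extract $g_M(H,Z)=0$. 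Worse, the statement itself is false in that case. Writing $\partial_i$ for $\tfrac{\partial}{\partial x_i}$, consider $F(x_1,x_2,x_3,x_4)=(\sqrt{x_1^2+x_2^2},\,x_3,\,x_4)$ on $\mathbb{R}^4\setminus\{x_1=x_2=0\}$ with the standard hyperk\"ahler structure of Section \ref{exam}: this is a Riemannian submersion with one-dimensional (hence automatically totally umbilical) fibers, $\mathcal{D}_2^R=\ker F_*$ and $\theta_I=\theta_J=\theta_K=\tfrac{\pi}{2}$ for all $R$. For the unit vertical field $V=\tfrac{1}{r}(-x_2\partial_1+x_1\partial_2)$, $r=\sqrt{x_1^2+x_2^2}$, one finds $H=\mathcal{T}_V V=-\tfrac{1}{r^2}(x_1\partial_1+x_2\partial_2)=\tfrac{1}{r}IV\neq 0$, which lies in $\Gamma(\omega_I\mathcal{D}_2^I)$ but is orthogonal to both $JV$ and $KV$; hence $H\in\Gamma(\mu_J)\cap\Gamma(\mu_K)$, contradicting \eqref{eq: umbil2} for $R\in\{J,K\}$. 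So the honest conclusion is not that the pure slant case is ``the hard part'' awaiting a frame argument, but that the lemma --- here and in \cite{P3} --- implicitly assumes each $\mathcal{D}_1^R$ is nonzero; your main argument proves exactly that case, and no argument can dispense with the assumption.
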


Using Lemma \ref {lem: umb}, we obtain

\begin{corollary}
Let $F$ be an almost h-semi-slant Riemannian map with totally
umbilical fibers from a hyperk\"{a}hler manifold $(M,I,J,K,g_M)$ to a Riemannian
manifold $(N, g_N)$ such that $(I,J,K)$ is an almost h-semi-slant basis with
the almost h-semi-slant angles $\{ \theta_I,\theta_J,\theta_K \}$.
Assume that $\theta_R = 0$ for some $R \in \{ I,J,K \}$.
Then the fibers of $F$ are minimal submanifolds of $M$.
\end{corollary}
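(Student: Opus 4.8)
The plan is to derive the conclusion directly from Lemma~\ref{lem: umb}, the only nontrivial ingredient being the observation that the hypothesis $\theta_R = 0$ forces the subbundle $\omega_R \mathcal{D}_2^R$ to be trivial. Since a fiber $F^{-1}(x)$ is a minimal submanifold of $M$ exactly when its mean curvature vector field $H$ vanishes, the whole problem reduces to proving $H = 0$.

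First I would apply Lemma~\ref{lem: umb}: because $F$ has totally umbilical fibers, its mean curvature vector field satisfies $H \in \Gamma(\omega_R \mathcal{D}_2^R)$ for every $R \in \{ I,J,K \}$; in particular this holds for the distinguished $R$ with $\theta_R = 0$. Next I would exploit the identity $g_M(\omega_R X, \omega_R Y) = \sin^2 \theta_R \, g_M(X,Y)$ for $X,Y \in \Gamma(\mathcal{D}_2^R)$, recorded just before Lemma~\ref{lem: angle}. Taking $Y = X$ and substituting $\theta_R = 0$ yields $|\omega_R X|^2 = \sin^2 \theta_R \, |X|^2 = 0$, so that $\omega_R X = 0$ for all $X \in \Gamma(\mathcal{D}_2^R)$. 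Hence $\omega_R \mathcal{D}_2^R = \{ 0 \}$.

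Combining the two steps gives $H \in \Gamma(\omega_R \mathcal{D}_2^R) = \{ 0 \}$, so $H = 0$ and the fibers are minimal. I do not anticipate any genuine obstacle here: the entire content is that $\omega_R$ annihilates the slant distribution $\mathcal{D}_2^R$ as soon as its slant angle degenerates to $0$, and the stated metric relation makes this immediate. The only point deserving a moment of care is to confirm that Lemma~\ref{lem: umb} indeed applies for the particular index $R$ singled out by the hypothesis, which it does since that lemma asserts the membership for each $R \in \{ I,J,K \}$.
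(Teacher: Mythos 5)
Your proof is correct and follows exactly the route the paper intends: the paper gives no separate proof beyond the phrase ``Using Lemma \ref{lem: umb}, we obtain,'' and the intended argument is precisely yours, namely that $\theta_R=0$ forces $\omega_R$ to vanish on $\mathcal{D}_2^R$ (via $g_M(\omega_R X,\omega_R X)=\sin^2\theta_R\, g_M(X,X)$), so $H\in\Gamma(\omega_R\mathcal{D}_2^R)=\{0\}$ and the fibers are minimal. Your write-up simply makes explicit the details the paper leaves to the reader.
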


\section{Decomposition theorems}\label{decom}

Let $(M,g_M)$ be a Riemannian manifold and $\mathcal{D}$ a ($C^{\infty}$-) distribution  on $M$.
The distribution $\mathcal{D}$ is said to be
{\em autoparallel} (or a {\em totally geodesic foliation}) if $\nabla_X Y\in \Gamma(\mathcal{D})$ for $X,Y\in \Gamma(\mathcal{D})$.
Given an autoparallel distribution $\mathcal{D}$ on $M$, it is easy to see that $\mathcal{D}$ is integrable and its leaves are totally geodesic in $M$.
Moreover, we call the distribution $\mathcal{D}$ {\em parallel} if $\nabla_Z Y\in \Gamma(\mathcal{D})$ for $Y\in \Gamma(\mathcal{D})$ and $Z\in \Gamma(TM)$.
Given a parallel distribution $\mathcal{D}$ on $M$, we easily obtain that its orthogonal complementary distribution $\mathcal{D}^{\perp}$ is also parallel.
In this case, $M$ is locally a Riemannian product manifold of the leaves of $\mathcal{D}$ and $\mathcal{D}^{\perp}$.
We can also obtain that if the distributions $\mathcal{D}$ and $\mathcal{D}^{\perp}$ are simultaneously autoparallel, then
they are also parallel. Using this fact, we have

\begin{theorem}
Let $F$ be an almost h-semi-slant Riemannian map from a
hyperk\"{a}hler manifold $(M,I,J,K,g_M)$ to a Riemannian
manifold $(N, g_N)$ such that $(I,J,K)$ is an almost h-semi-slant
basis. Then the following conditions are equivalent:

a) $(M,g_M)$ is locally a Riemannian product manifold of the leaves of $\ker F_*$ and $(\ker F_*)^{\perp}$

b)
$$
\omega_I (\widehat{\nabla}_X \phi_I Y+\mathcal{T}_X \omega_I
Y)+C_I(\mathcal{T}_X \phi_I Y+\mathcal{H}\nabla_X \omega_I Y) = 0
\quad \text{for} \ X,Y\in \Gamma(\ker F_*),
$$
$$
\phi_I(\mathcal{V}{\nabla}_Z B_IW+\mathcal{A}_Z C_IW)+B_I(\mathcal{A}_Z
B_IW+\mathcal{H}\nabla_Z C_IW) = 0  \quad
\text{for} \ Z,W\in \Gamma((\ker F_*)^{\perp}).
$$

c)
$$
\omega_J (\widehat{\nabla}_X \phi_J Y+\mathcal{T}_X \omega_J
Y)+C_J(\mathcal{T}_X \phi_J Y+\mathcal{H}\nabla_X \omega_J Y) = 0
\quad \text{for} \ X,Y\in \Gamma(\ker F_*),
$$
$$
\phi_J(\mathcal{V}{\nabla}_Z B_JW+\mathcal{A}_Z C_JW)+B_J(\mathcal{A}_Z
B_JW+\mathcal{H}\nabla_Z C_JW) = 0 \quad
\text{for} \ Z,W\in \Gamma((\ker F_*)^{\perp}).
$$

d)
$$
\omega_K (\widehat{\nabla}_X \phi_K Y+\mathcal{T}_X \omega_K
Y)+C_K(\mathcal{T}_X \phi_K Y+\mathcal{H}\nabla_X \omega_K Y) = 0
\quad \text{for} \ X,Y\in \Gamma(\ker F_*),
$$
$$
\phi_K(\mathcal{V}{\nabla}_Z B_KW+\mathcal{A}_Z C_KW)+B_K(\mathcal{A}_Z
B_KW+\mathcal{H}\nabla_Z C_KW) = 0 \quad
\text{for} \ Z,W\in \Gamma((\ker F_*)^{\perp}).
$$
\end{theorem}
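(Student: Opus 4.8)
The plan is to reduce condition a) to the simultaneous autoparallelism of $\ker F_*$ and $(\ker F_*)^{\perp}$, and then to translate each autoparallelism statement into the corresponding displayed tensor equation by exploiting the hyperk\"ahler parallelism of the complex structures. By the discussion preceding the theorem, if both $\ker F_*$ and $(\ker F_*)^{\perp}$ are autoparallel then they are parallel, and $M$ is locally a Riemannian product of their leaves; conversely, a local product structure forces both distributions to be parallel, hence autoparallel. Thus a) holds if and only if $\mathcal{H}\nabla_X Y = 0$ for all $X,Y\in\Gamma(\ker F_*)$ and $\mathcal{V}\nabla_Z W = 0$ for all $Z,W\in\Gamma((\ker F_*)^{\perp})$.

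First I would compute $\mathcal{H}\nabla_X Y$ for $X,Y\in\Gamma(\ker F_*)$. Since $M$ is hyperk\"ahler, $I$ is parallel, so $\nabla_X Y = -I(I\nabla_X Y) = -I\,\nabla_X(IY)$. Writing $IY = \phi_I Y + \omega_I Y$ and splitting $\nabla_X \phi_I Y = \widehat{\nabla}_X\phi_I Y + \mathcal{T}_X\phi_I Y$ and $\nabla_X \omega_I Y = \mathcal{T}_X\omega_I Y + \mathcal{H}\nabla_X\omega_I Y$ into vertical and horizontal parts via the O'Neill tensor $\mathcal{T}$, one obtains that $I\nabla_X Y$ has vertical part $\widehat{\nabla}_X\phi_I Y + \mathcal{T}_X\omega_I Y$ and horizontal part $\mathcal{T}_X\phi_I Y + \mathcal{H}\nabla_X\omega_I Y$. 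Applying $-I$ and using $IV = \phi_I V + \omega_I V$ for vertical $V$ and $IH = B_I H + C_I H$ for horizontal $H$ (that is, $(\ref{eq: proj2})$ and $(\ref{eq: proj3})$), I extract the horizontal component and find $\mathcal{H}\nabla_X Y = -[\omega_I(\widehat{\nabla}_X\phi_I Y + \mathcal{T}_X\omega_I Y) + C_I(\mathcal{T}_X\phi_I Y + \mathcal{H}\nabla_X\omega_I Y)]$. Hence $\ker F_*$ is autoparallel precisely when the first equation of b) holds.

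The second equation arises by the dual computation for $Z,W\in\Gamma((\ker F_*)^{\perp})$. Writing $\nabla_Z W = -I\,\nabla_Z(IW)$ with $IW = B_I W + C_I W$ and splitting via the O'Neill tensor $\mathcal{A}$, so that $\nabla_Z B_I W = \mathcal{V}\nabla_Z B_I W + \mathcal{A}_Z B_I W$ and $\nabla_Z C_I W = \mathcal{A}_Z C_I W + \mathcal{H}\nabla_Z C_I W$, one reads off the vertical part of $\nabla_Z W$ as $\mathcal{V}\nabla_Z W = -[\phi_I(\mathcal{V}\nabla_Z B_I W + \mathcal{A}_Z C_I W) + B_I(\mathcal{A}_Z B_I W + \mathcal{H}\nabla_Z C_I W)]$. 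Therefore $(\ker F_*)^{\perp}$ is autoparallel exactly when the second equation of b) holds, which together with the previous paragraph yields a) $\Leftrightarrow$ b).

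Replacing $I$ by $J$ and by $K$, each of which is likewise parallel in the hyperk\"ahler setting, the identical computations give a) $\Leftrightarrow$ c) and a) $\Leftrightarrow$ d). I expect the main technical effort to be the careful bookkeeping of vertical and horizontal components through $\mathcal{T}$, $\mathcal{A}$, and the structure tensors $\phi_R,\omega_R,B_R,C_R$; the essential geometric input is merely $\nabla I = \nabla J = \nabla K = 0$, which permits the substitution $\nabla_X Y = -R\,\nabla_X(RY)$ and thereby exposes the structure tensors in the required form.
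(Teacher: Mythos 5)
Your proposal is correct and follows essentially the same route as the paper: both reduce a) to simultaneous autoparallelism of $\ker F_*$ and $(\ker F_*)^{\perp}$ via the discussion preceding the theorem, then use $\nabla R = 0$ to write $\nabla_X Y = -R\nabla_X(RY)$ and read off the obstruction through $\phi_R,\omega_R,B_R,C_R$ and the tensors $\mathcal{T},\mathcal{A}$. The only cosmetic difference is that you extract just the horizontal (resp.\ vertical) component directly, whereas the paper writes out the full decomposition of $\nabla_X Y$ (resp.\ $\nabla_Z W$) before identifying the same conditions.
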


\begin{proof}
Given $R\in \{ I,J,K \}$, for $X,Y\in \Gamma(\ker F_*)$, we get
\begin{align*}
\nabla_X Y&= -R\nabla_X RY= -R(\widehat{\nabla}_X \phi_R Y+\mathcal{T}_X \phi_R Y+\mathcal{T}_X \omega_R Y
             +\mathcal{H}\nabla_X \omega_R Y)   \\
          &= -(\phi_R \widehat{\nabla}_X \phi_R Y+\omega_R\widehat{\nabla}_X \phi_R Y+B_R\mathcal{T}_X \phi_R Y
             +C_R\mathcal{T}_X \phi_R Y+\phi_R \mathcal{T}_X \omega_R Y   \\
          & \ \ \ +\omega_R \mathcal{T}_X \omega_R Y  +B_R\mathcal{H}\nabla_X \omega_R Y+C_R\mathcal{H}\nabla_X \omega_R Y).
\end{align*}
Thus,
$$
\nabla_X Y\in \Gamma(\ker F_*) \Leftrightarrow
\omega_R(\widehat{\nabla}_X \phi_R Y+\mathcal{T}_X \omega_R
Y)+C_R(\mathcal{T}_X \phi_R Y+\mathcal{H}\nabla_X \omega_R Y) =0.
$$
For $Z,W\in \Gamma((\ker F_*)^{\perp})$, we have
\begin{align*}
\nabla_Z W & = -R\nabla_Z RW = -R(\mathcal{V}\nabla_Z B_RW+\mathcal{A}_Z
B_RW+\mathcal{A}_Z C_RW+\mathcal{H}\nabla_Z C_RW)   \\
           & = -(\phi_R \mathcal{V}\nabla_Z B_RW+\omega_R\mathcal{V}\nabla_Z B_RW+B_R\mathcal{A}_Z B_RW
             +C_R\mathcal{A}_Z B_RW+\phi_R \mathcal{A}_Z C_RW   \\
           &  \ \ \ +\omega_R \mathcal{A}_Z C_RW  +B_R\mathcal{H}\nabla_Z C_RW+C_R\mathcal{H}\nabla_Z C_RW).
\end{align*}
Thus,
$$
\nabla_Z W\in \Gamma((\ker F_*)^{\perp}) \Leftrightarrow
\phi_R(\mathcal{V}{\nabla}_Z B_RW+\mathcal{A}_Z C_RW)+B_R(\mathcal{A}_Z
B_RW+\mathcal{H}\nabla_Z C_RW) = 0.
$$
Hence, we obtain
$$
a) \Leftrightarrow b), \quad a) \Leftrightarrow c), \quad a)
\Leftrightarrow d).
$$
Therefore, the result follows.
\end{proof}

\begin{theorem}
Let $F$ be a h-semi-slant Riemannian map from a
hyperk\"{a}hler manifold $(M,I,J,K,g_M)$ to a Riemannian
manifold $(N, g_N)$ such that $(I,J,K)$ is a h-semi-slant
basis. Then the following conditions are equivalent:

a) the fibers of $F$ are locally Riemannian product manifolds of the leaves of $\mathcal{D}_1$ and $\mathcal{D}_2$

b)
$$
Q_I(\phi_I\widehat{\nabla}_U \phi_I V+B_I\mathcal{T}_U \phi_I V) =
0 \ \text{and} \ \omega_I\widehat{\nabla}_U \phi_I
V+C_I\mathcal{T}_U \phi_I V = 0
$$
for $U,V\in \Gamma(\mathcal{D}_1)$,

\begin{align*}
  &P_I(\phi_I(\widehat{\nabla}_X \phi_I Y+\mathcal{T}_X \omega_I Y)+B_I(\mathcal{T}_X \phi_I Y
    +\mathcal{H}\nabla_X \omega_I Y)) = 0     \\
  &\omega_I(\widehat{\nabla}_X \phi_I Y+\mathcal{T}_X \omega_I Y)+C_I(\mathcal{T}_X \phi_I Y
    +\mathcal{H}\nabla_X \omega_I Y) = 0
\end{align*}
for $X,Y\in \Gamma(\mathcal{D}_2)$.

c)
$$
Q_J(\phi_J\widehat{\nabla}_U \phi_J V+B_J\mathcal{T}_U \phi_J V) =
0 \ \text{and} \ \omega_J\widehat{\nabla}_U \phi_J
V+C_J\mathcal{T}_U \phi_J V = 0
$$
for $U,V\in \Gamma(\mathcal{D}_1)$,

\begin{align*}
  &P_J(\phi_J(\widehat{\nabla}_X \phi_J Y+\mathcal{T}_X \omega_J Y)+B_J(\mathcal{T}_X \phi_J Y
    +\mathcal{H}\nabla_X \omega_J Y)) = 0     \\
  &\omega_J(\widehat{\nabla}_X \phi_J Y+\mathcal{T}_X \omega_J Y)+C_J(\mathcal{T}_X \phi_J Y
    +\mathcal{H}\nabla_X \omega_J Y) = 0
\end{align*}
for $X,Y\in \Gamma(\mathcal{D}_2)$.

d)
$$
Q_K(\phi_K\widehat{\nabla}_U \phi_K V+B_K\mathcal{T}_U \phi_K V) =
0 \ \text{and} \ \omega_K\widehat{\nabla}_U \phi_K
V+C_K\mathcal{T}_U \phi_K V = 0
$$
for $U,V\in \Gamma(\mathcal{D}_1)$,

\begin{align*}
  &P_K(\phi_K(\widehat{\nabla}_X \phi_K Y+\mathcal{T}_X \omega_K Y)+B_K(\mathcal{T}_X \phi_K Y
    +\mathcal{H}\nabla_X \omega_K Y)) = 0     \\
  &\omega_K(\widehat{\nabla}_X \phi_K Y+\mathcal{T}_X \omega_K Y)+C_K(\mathcal{T}_X \phi_K Y
    +\mathcal{H}\nabla_X \omega_K Y) = 0
\end{align*}
for $X,Y\in \Gamma(\mathcal{D}_2)$.
\end{theorem}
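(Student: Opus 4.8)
The plan is to reduce the product statement to a pair of autoparallel (totally geodesic foliation) conditions and then read those conditions off from a direct computation of the ambient covariant derivative, exactly in the style of the preceding decomposition theorem. Since $\ker F_* = \mathcal{D}_1 \oplus \mathcal{D}_2$ is an orthogonal splitting, I would first invoke the fact recorded at the start of this section applied inside a fiber: the fibers are locally Riemannian products of the leaves of $\mathcal{D}_1$ and $\mathcal{D}_2$ precisely when $\mathcal{D}_1$ and $\mathcal{D}_2$ are simultaneously totally geodesic foliations, that is $\nabla_U V \in \Gamma(\mathcal{D}_1)$ for $U,V \in \Gamma(\mathcal{D}_1)$ and $\nabla_X Y \in \Gamma(\mathcal{D}_2)$ for $X,Y \in \Gamma(\mathcal{D}_2)$. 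It then suffices to translate each membership into the two stated equations, and because the calculation is symmetric in $R \in \{I,J,K\}$, the three equivalences $a)\Leftrightarrow b)$, $a)\Leftrightarrow c)$, $a)\Leftrightarrow d)$ follow from a single computation.

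For the $\mathcal{D}_1$-half, fix $R \in \{I,J,K\}$ and $U,V \in \Gamma(\mathcal{D}_1)$. Using that $M$ is hyperk\"ahler ($R$ parallel, $R^2 = -\mathrm{id}$), I would write $\nabla_U V = -R\nabla_U RV$; since $R(\mathcal{D}_1)=\mathcal{D}_1$ gives $RV = \phi_R V$ with $\omega_R V = 0$, I decompose $\nabla_U \phi_R V = \widehat{\nabla}_U \phi_R V + \mathcal{T}_U \phi_R V$ via (\ref{eq: vert}) and (\ref{eq: oten2}), apply $-R$, and split using (\ref{eq: proj2})–(\ref{eq: proj3}) to obtain
\begin{align*}
\widehat{\nabla}_U V &= -\bigl(\phi_R\widehat{\nabla}_U \phi_R V + B_R\mathcal{T}_U \phi_R V\bigr), \\
\mathcal{T}_U V &= -\bigl(\omega_R\widehat{\nabla}_U \phi_R V + C_R\mathcal{T}_U \phi_R V\bigr).
\end{align*}
The requirement $\nabla_U V \in \Gamma(\mathcal{D}_1)$ then splits into the vertical projection $Q_R(\widehat{\nabla}_U V)=0$, which is the first displayed equation of $b)$, together with $\mathcal{T}_U V = 0$, which is the second.

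For the $\mathcal{D}_2$-half, fix $X,Y \in \Gamma(\mathcal{D}_2)$ and use $RY = \phi_R Y + \omega_R Y$ with $\phi_R Y \in \Gamma(\mathcal{D}_2)$ and $\omega_R Y \in \Gamma((\ker F_*)^{\perp})$. Expanding $\nabla_X RY = (\widehat{\nabla}_X \phi_R Y + \mathcal{T}_X \omega_R Y) + (\mathcal{T}_X \phi_R Y + \mathcal{H}\nabla_X \omega_R Y)$, where the first parenthesis is vertical and the second horizontal, then applying $-R$ and splitting again by (\ref{eq: proj2})–(\ref{eq: proj3}), gives $\widehat{\nabla}_X Y$ and $\mathcal{T}_X Y$ in terms of $\phi_R,\omega_R,B_R,C_R$. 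The membership $\nabla_X Y \in \Gamma(\mathcal{D}_2)$ becomes $P_R(\widehat{\nabla}_X Y)=0$, the first displayed equation of the $\mathcal{D}_2$ block, together with $\mathcal{T}_X Y = 0$, the second. Reading both blocks simultaneously for the single basis element $R$ yields $a)\Leftrightarrow b)$, and the identical argument for the other two structures gives $a)\Leftrightarrow c)$ and $a)\Leftrightarrow d)$.

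The genuinely routine part is the four-fold bookkeeping in the $(\phi_R,\omega_R,B_R,C_R)$-decomposition. The step I expect to require the most care is the reduction in the first paragraph: I want the two conditions per distribution—one projecting the vertical part and one forcing the horizontal part $\mathcal{T}$ to vanish—to be not merely sufficient but equivalent to the local product structure of the fibers. Concretely, the delicate point is matching the intrinsic product geometry of a fiber, governed by its own induced connection $\widehat{\nabla}$, against the ambient derivative $\nabla$ appearing in the computation, and it is the autoparallel-implies-parallel fact that must carry exactly this passage.
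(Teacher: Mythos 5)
Your proposal follows the paper's proof essentially verbatim: the same reduction of a) to simultaneous autoparallelism of $\mathcal{D}_1$ and $\mathcal{D}_2$, the same computations $\nabla_U V = -R\nabla_U RV$ and $\nabla_X Y = -R\nabla_X RY$ expanded through $\phi_R,\omega_R,B_R,C_R$, and the same vertical/horizontal splitting producing the $Q_R$ (resp.\ $P_R$) projection condition together with the vanishing of the horizontal part, done once for generic $R$ and quoted three times. The delicate point you flag at the end --- that a) concerns the fiber's intrinsic connection $\widehat{\nabla}$ while the computation tests membership of $\nabla_U V$ and $\nabla_X Y$ under the ambient connection --- is handled no more carefully in the paper, which passes from $\nabla_U V\in \Gamma(\mathcal{D}_1)$, $\nabla_X Y\in \Gamma(\mathcal{D}_2)$ to a) without further comment, so your write-up matches the published argument in both substance and level of rigor.
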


\begin{proof}
Given $R\in \{ I,J,K \}$, for $U,V\in \Gamma(\mathcal{D}_1)$, we get
\begin{align*}
\nabla_U V&= -J\nabla_U JV= -J(\widehat{\nabla}_U \phi V+\mathcal{T}_U \phi V)   \\
          &= -(\phi \widehat{\nabla}_U \phi V+\omega\widehat{\nabla}_U \phi V+B\mathcal{T}_U \phi V
             +C\mathcal{T}_U \phi V).
\end{align*}
Thus,
$$
\nabla_U V\in \Gamma(\mathcal{D}_1) \Leftrightarrow Q(\phi
\widehat{\nabla}_U \phi V+B\mathcal{T}_U \phi V) = 0 \ \text{and} \
\omega\widehat{\nabla}_U \phi V+C\mathcal{T}_U \phi V = 0.
$$
For $X,Y\in \Gamma(\mathcal{D}_2)$, we have
\begin{align*}
\nabla_X Y&= -R\nabla_X RY= -R(\widehat{\nabla}_X
\phi_R Y+\mathcal{T}_X \phi_R Y+\mathcal{T}_X \omega_R Y+\mathcal{H}\nabla_X \omega_R Y)   \\
          &= -(\phi_R \widehat{\nabla}_X \phi_R Y+\omega_R\widehat{\nabla}_X \phi_R Y+B_R\mathcal{T}_X \phi_R Y
             +C_R\mathcal{T}_X \phi_R Y+\phi_R \mathcal{T}_X \omega_R Y   \\
          & \ \ \ +\omega_R \mathcal{T}_X \omega_R Y  +B_R\mathcal{H}\nabla_X \omega_R Y+
          C_R\mathcal{H}\nabla_X \omega_R Y).
\end{align*}
Thus,

$\nabla_X Y\in \Gamma(\mathcal{D}_2) \Leftrightarrow$
$$
  \begin{cases}
     P_R(\phi_R(\widehat{\nabla}_X \phi_R Y+\mathcal{T}_X \omega_R Y)+B_R(\mathcal{T}_X \phi_R Y
    +\mathcal{H}\nabla_X \omega_R Y)) = 0,& \\
     \omega_R(\widehat{\nabla}_X \phi_R Y+\mathcal{T}_X \omega_R Y)+C_R(\mathcal{T}_X \phi_R Y
    +\mathcal{H}\nabla_X \omega_R Y) = 0.&
  \end{cases}
$$
Hence, we have
$$
a) \Leftrightarrow b), \quad a) \Leftrightarrow c), \quad a)
\Leftrightarrow d).
$$
Therefore, we obtain the result.
\end{proof}

\section{Examples}\label{exam}

Note that given an Euclidean space $\mathbb{R}^{4m}$ with
coordinates $(x_1,x_2,\cdots,x_{4m})$, we can canonically choose
complex structures $I, J, K$ on $\mathbb{R}^{4m}$ as follows:
\begin{align*}
  &I(\tfrac{\partial}{\partial x_{4k+1}})=\tfrac{\partial}{\partial x_{4k+2}},
  I(\tfrac{\partial}{\partial x_{4k+2}})=-\tfrac{\partial}{\partial x_{4k+1}},
  I(\tfrac{\partial}{\partial x_{4k+3}})=\tfrac{\partial}{\partial x_{4k+4}},
  I(\tfrac{\partial}{\partial x_{4k+4}})=-\tfrac{\partial}{\partial x_{4k+3}},     \\
  &J(\tfrac{\partial}{\partial x_{4k+1}})=\tfrac{\partial}{\partial x_{4k+3}},
  J(\tfrac{\partial}{\partial x_{4k+2}})=-\tfrac{\partial}{\partial x_{4k+4}},
  J(\tfrac{\partial}{\partial x_{4k+3}})=-\tfrac{\partial}{\partial x_{4k+1}},
  J(\tfrac{\partial}{\partial x_{4k+4}})=\tfrac{\partial}{\partial x_{4k+2}},    \\
  &K(\tfrac{\partial}{\partial x_{4k+1}})=\tfrac{\partial}{\partial x_{4k+4}},
  K(\tfrac{\partial}{\partial x_{4k+2}})=\tfrac{\partial}{\partial x_{4k+3}},
  K(\tfrac{\partial}{\partial x_{4k+3}})=-\tfrac{\partial}{\partial x_{4k+2}},
  K(\tfrac{\partial}{\partial x_{4k+4}})=-\tfrac{\partial}{\partial x_{4k+1}}
\end{align*}
for $k\in \{ 0,1,\cdots,m-1 \}$.
Then it is easy to check that $(I,J,K,<\ ,\ >)$ is a hyperk\"{a}hler structure on $\mathbb{R}^{4m}$,
where $<\ ,\ >$ denotes the Euclidean metric on $\mathbb{R}^{4m}$.
Throughout this section, we will use these notations.

\begin{example}
Let $F$ be an almost h-slant submersion from an almost
quaternionic Hermitian manifold $(M, E, g_M)$ onto a Riemannian
manifold $(N, g_N )$. Then the map $F : (M, E, g_M) \mapsto (N,
g_N )$ is a h-semi-slant Riemannian map with $\mathcal{D}_2 = \ker
F_*$ \cite{P}.
\end{example}

\begin{example}
Let $F$ be an almost h-semi-slant submersion from an almost
quaternionic Hermitian manifold $(M, E, g_M)$ onto a Riemannian
manifold $(N, g_N )$. Then the map $F : (M, E, g_M) \mapsto (N,
g_N )$ is an almost h-semi-slant Riemannian map \cite{P3}.
\end{example}

\begin{example}
Let $(M,E,g)$ be an almost quaternionic Hermitian manifold. Let
$\pi : TM \mapsto M$ be the natural projection. Then the map $\pi$
is a strictly h-semi-slant Riemannian map such that $\mathcal{D}_1 =
\ker \pi_*$ and the strictly h-semi-slant angle $\theta=0$
\cite{IMV}.
\end{example}

\begin{example}
Let $(M,E_M,g_M)$ and $(N,E_N,g_N)$ be almost quaternionic
Hermitian manifolds. Let $F : M \mapsto N$ be a quaternionic
submersion. Then the map $F$ is a strictly h-semi-slant Riemannian map
such that $\mathcal{D}_1 = \ker F_*$ and the strictly h-semi-slant
angle $\theta=0$ \cite{IMV}.
\end{example}

\begin{example}
Define a map $\displaystyle{F : \mathbb{R}^8 \mapsto
\mathbb{R}^4}$ by
$$
F(x_1,\cdots,x_8)=(x_2, x_1\sin \alpha-x_3\cos \alpha, 2012, x_4),
$$
where $\alpha$ is constant. Then the map $F$ is a strictly
h-semi-slant Riemannian map such that
$$
\mathcal{D}_1 = <\frac{\partial}{\partial x_5},
\frac{\partial}{\partial x_6},\frac{\partial}{\partial
x_7},\frac{\partial}{\partial x_8}> \ \text{and} \ \mathcal{D}_2 =
<\cos \alpha\frac{\partial}{\partial x_1}+\sin
\alpha\frac{\partial}{\partial x_3}>
$$
with the strictly h-semi-slant angle $\theta=\frac{\pi}{2}$.
\end{example}

\begin{example}
Let $(M,E,g_M)$ be a $4m-$dimensional almost quaternionic Hermitian manifold
and $(N,g_N)$ a $(4m-1)-$dimensional Riemannian manifold. Let
$F : (M,E,g_M) \mapsto  (N,g_N)$ be a Riemannian map with $\rank F = 4m-1$.
Then the map $F$ is a strictly h-semi-slant Riemannian map such that
$\mathcal{D}_2 = \ker F_*$ and the strictly h-semi-slant angle $\theta=\frac{\pi}{2}$.
\end{example}

\begin{example}
Define a map $\displaystyle{F : \mathbb{R}^{12} \mapsto
\mathbb{R}^5}$ by
$$
F(x_1,\cdots,x_{12})=(x_{6}, \frac{x_1-x_3}{\sqrt{2}}, c,  x_4,
\frac{x_5-x_{7}}{\sqrt{2}}),
$$
where $c$ is constant.
Then the map $F$ is a h-semi-slant Riemannian map such that
$$
\mathcal{D}_1 = <\frac{\partial}{\partial x_9},
\frac{\partial}{\partial x_{10}},\frac{\partial}{\partial x_{11}},
\frac{\partial}{\partial x_{12}}> \ \text{and} \ \mathcal{D}_2 =
<\frac{\partial}{\partial x_2},\frac{\partial}{\partial
x_{8}},\frac{\partial}{\partial x_1}+\frac{\partial}{\partial
x_3}, \frac{\partial}{\partial x_5}+\frac{\partial}{\partial
x_{7}}>
$$
with the h-semi-slant angles $\{ \theta_I=
\frac{\pi}{4},\theta_J=\frac{\pi}{2},\theta_K=\frac{\pi}{4} \}$.
\end{example}

\begin{example}
Define a map $F : \mathbb{R}^{12} \mapsto \mathbb{R}^7$ by
$$
F(x_1,\cdots,x_{12})=(x_5\cos \alpha-x_7\sin \alpha, \gamma, x_6\sin
\beta-x_8\cos \beta, x_9, x_{11}, x_{12}, x_{10}),
$$
where $\alpha$, $\beta$, and $\gamma$ are constant. Then the map $F$ is a
h-semi-slant Riemannian map such that
$$
\mathcal{D}_1 = <\frac{\partial}{\partial x_1},
\frac{\partial}{\partial x_{2}},\frac{\partial}{\partial
x_{3}},\frac{\partial}{\partial x_{4}}>
$$
and
$$
\mathcal{D}_2 = <\sin \alpha\frac{\partial}{\partial x_5}+\cos
\alpha\frac{\partial}{\partial x_7}, \cos
\beta\frac{\partial}{\partial x_6}+\sin
\beta\frac{\partial}{\partial x_8}>
$$
with the h-semi-slant angles $\{
\theta_I,\theta_J=\frac{\pi}{2},\theta_K \}$ such that $\cos
\theta_I=|\sin (\alpha+\beta)|$ and $\cos \theta_K=|\cos
(\alpha+\beta)|$.
\end{example}

\begin{example}
Define a map $F : \mathbb{R}^{12} \mapsto \mathbb{R}^7$ by
$$
F(x_1,\cdots,x_{12})=(x_3, x_4, 0, x_7, x_5, x_6, x_8).
$$
Then the map $F$ is an almost h-semi-slant Riemannian map  such that
\begin{align*}
&\mathcal{D}_1^I = <\frac{\partial}{\partial x_1},
   \frac{\partial}{\partial x_2},\frac{\partial}{\partial x_9},
  \frac{\partial}{\partial x_{10}},\frac{\partial}{\partial x_{11}},
  \frac{\partial}{\partial x_{12}}>,     \\
&\mathcal{D}_1^J =\mathcal{D}_1^K = <\frac{\partial}{\partial
   x_9}, \frac{\partial}{\partial x_{10}},\frac{\partial}{\partial
   x_{11}},\frac{\partial}{\partial x_{12}}>,     \\
&\mathcal{D}_2^I = 0, \quad \mathcal{D}_2^J =\mathcal{D}_2^K =
<\frac{\partial}{\partial x_1},
   \frac{\partial}{\partial x_2}>.
\end{align*}
with the almost h-semi-slant angles $\{
\theta_I=0,\theta_J=\frac{\pi}{2},\theta_K=\frac{\pi}{2} \}$.
\end{example}

\begin{example}
Define a map $F : \mathbb{R}^{12} \mapsto \mathbb{R}^6$ by
$$
F(x_1,\cdots,x_{12})=(x_2, x_5, \alpha, x_1, \beta, x_7),
$$
where $\alpha$ and $\beta$ are constant.
Then the map $F$ is an almost h-semi-slant Riemannian map such that
\begin{align*}
&\mathcal{D}_1^I = <\frac{\partial}{\partial x_3},
   \frac{\partial}{\partial x_4},\frac{\partial}{\partial x_9},
  \frac{\partial}{\partial x_{10}},\frac{\partial}{\partial x_{11}},
  \frac{\partial}{\partial x_{12}}>,     \\
&\mathcal{D}_1^J = <\frac{\partial}{\partial x_6},
   \frac{\partial}{\partial x_8},\frac{\partial}{\partial x_9},
  \frac{\partial}{\partial x_{10}},\frac{\partial}{\partial x_{11}},
  \frac{\partial}{\partial x_{12}}>,     \\
&\mathcal{D}_1^K = <\frac{\partial}{\partial x_9},
  \frac{\partial}{\partial x_{10}},\frac{\partial}{\partial x_{11}},
  \frac{\partial}{\partial x_{12}}>,     \\
&\mathcal{D}_2^I =<\frac{\partial}{\partial
x_6},\frac{\partial}{\partial x_8}>, \quad \mathcal{D}_2^J =
<\frac{\partial}{\partial x_3}, \frac{\partial}{\partial x_4}>,   \\
&\mathcal{D}_2^K = <\frac{\partial}{\partial
x_3},\frac{\partial}{\partial x_4},\frac{\partial}{\partial
x_6},\frac{\partial}{\partial x_8}>
\end{align*}
with the almost h-semi-slant angles $\{
\theta_I=\frac{\pi}{2},\theta_J=\frac{\pi}{2},\theta_K=\frac{\pi}{2}
\}$.
\end{example}

\begin{example}
Let $\widetilde{F}$ be a h-semi-slant Riemannian map from an almost
quaternionic Hermitian manifold $(M_1, E_1, g_{M_1})$ to a
Riemannian manifold $(N, g_N )$ with $\mathcal{D}_2 = \ker
\widetilde{F}_*$. Let $(M_2, E_2, g_{M_2})$ be an
almost quaternionic Hermitian manifold. Denote by $(M,E,g_M)$ the warped product of $(M_1, E_1, g_{M_1})$ and
$(M_2, E_2, g_{M_2})$ by a positive function $g$ on $M_1$ \cite {FIP}, where $E=E_1\times E_2$.

Define a map $F : (M,E,g_M) \mapsto (N,g_N)$ by
$$
F(x,y) = \widetilde{F}(x) \quad \text{for} \ x\in M_1 \ \text{and} \
y\in M_2.
$$
Then the map $F$ is a h-semi-slant Riemannian map such that
$$
\mathcal{D}_1 = TM_2 \ \text{and} \ \mathcal{D}_2 = \ker
\widetilde{F}_*
$$
with the h-semi-slant angles $\{ \theta_I,\theta_J,\theta_K \}$,
where $\{ I,J,K \}$ is a h-slant basis for the map
$\widetilde{F}$ with the h-semi-slant angles $\{ \theta_I,\theta_J,\theta_K
\}$.

Note that as a generalization of an almost h-slant submersion \cite{P}, we call the map $\widetilde{F}$ an {\em almost h-slant Riemannian map}.
\end{example}

%\section*{Acknowledgments}

%The authors are grateful to the referees for their valuable comments and suggestions.

\end{document}